\documentclass[10pt]{amsart}
\usepackage{amsmath,amssymb,amsthm}

\usepackage{float}
\usepackage[all,cmtip]{xy}
\usepackage{tikz}
\usetikzlibrary{matrix}
\usetikzlibrary{cd}
\usepackage{subfig}
\usepackage[colorlinks,hyperindex,linkcolor=blue]{hyperref}
\hypersetup{
 pdfauthor = {Javier Martínez-Aguinaga},
 pdftitle= {Fat distributions with Reeb directions need not be complex contact},
 pdfsubject = {Differential Geometry, Geometric Topology, Distributions on Manifolds}}

\newcommand{\R}{{\mathbb{R}}}

\newcommand{\SD}{{\mathcal{D}}}

\newcommand{\Hom}{{\text{Hom}}}
\newcommand{\Sym}{{\text{Sym}}}

\newcommand{\SL}{{\mathcal{L}}}

\newcommand{\Op}{{\mathcal{O}p}}

\newcommand{\Id}{{\operatorname{Id}}}

\newcommand{\std}{{\operatorname{std}}}

\newcommand{\Diff}{{\operatorname{Diff}}}

\newtheorem{proposition}{Proposition}[section]
\newtheorem{theorem}[proposition]{Theorem}
\newtheorem{definition}[proposition]{Definition}
\newtheorem{lemma}[proposition]{Lemma}

\newtheorem{remark}[proposition]{Remark}
\newtheorem{question}[proposition]{Question}
\newtheorem{example}[proposition]{Example}

\makeatletter
\newcommand{\superimpose}[2]{%
  {\ooalign{$#1\@firstoftwo#2$\cr\hfil$#1\@secondoftwo#2$\hfil\cr}}}
\makeatother

\title{Fat distributions with Reeb directions need not be complex contact}

\subjclass[2020]{Primary: 53C15. Secondary: 	53D10, 58A30.}

\author{Javier Mart\'{i}nez-Aguinaga}
\address{Universidad Complutense de Madrid, Departamento de Álgebra, Geometría y Topología, Facultad de Ciencias
Matemáticas. 28040 Madrid, Spain}
\email{frmart02@ucm.es}

\emergencystretch=\maxdimen
\begin{document}

\begin{abstract} 

It is well known that every complex contact $3$-manifold, when regarded as a real manifold, gives rise to a fat $(4,6)$-distribution that admits two Reeb directions. Nonetheless, it was an open question whether the converse was true. This was not known even at the level of germs. The present work completely answers this question in the negative. We construct the first example of a fat distribution with two Reeb directions that does not support a complex contact structure anywhere, not even locally nor up to diffeomorphism.  This result answers an open question by Aritra Bhowmick.

\medskip

\end{abstract}
\maketitle
\addtocontents{toc}{\setcounter{tocdepth}{1}}

\section{Introduction}

Fat distributions in dimension $6$ 
 constitute a relevant class of geometric structures with much recent activity for different reasons. On one hand, they are closely related to complex contact $3$-manifolds, representing the real counterpart of that geometry \cite{CE2003}. On the other hand, they possess outstanding local properties that allow to establish local $h$-principle results for their horizontal submanifolds \cite{Bhow2024,
BhowThesis, BhDa}.

Every complex contact $3$-manifold defines a fat $(4,6)$ real distribution equipped with two transverse directions called \textit{Reeb directions}. It was an open question whether the converse holds; i.e. whether every real fat $(4,6)$-distribution with two Reeb directions is induced by a complex contact structure. This question, which we answer in this work, was open even at the local level; i.e. no counterexample was known even for germs of distributions (see Question \ref{question}).

The work \cite{Bhow2024} of A. Bhowmick showed that, in addition, fat $(4,6)$-distributions with Reeb directions also represent an outstanding subclass of distributions with remarkable local properties, proving a local $h-$principle for horizontal maps, as well as the existence of germs of horizontal surfaces into such distributions.

It is worth noting that there are actually limited flexibility results at the level of horizontal submanifolds for other types of distributions beyond the contact-theoretical framework (see \cite{dPT, MdP2021, dPS} and \cite[Ch. 22]{EM}). Alternatively, the work \cite{BhDa} by A. Bhowmick and M. Datta yields substantial flexibility results for horizontal embeddings in fat $(4,6)$-distributions by different methods but they do not cover the local $h$-principle for horizontal surfaces  \cite{Bhow2024}, whose proof strongly relies on the presence of Reeb directions. All this together highlights the importance of understanding local and global properties of fat $(4,6)$-distributions with Reeb directions as a distinguished class of distributions.

It is a fundamental question whether fat $(4,6)$-distributions with Reeb directions are locally the same as complex contact structures or, on the contrary, whether they form a strictly larger class. This motivated the following question by A. Bhowmick (which appears in the form of Question 6.2.2 in \cite{BhowThesis} and in the form of Question 2.11 in \cite{Bhow2024}).

 \begin{question}[A. Bhowmick, \cite{BhowThesis, Bhow2024}]\label{question} Is every (germ of) corank 2 fat distribution on $\R^6$, which admits local Reeb
directions, diffeomorphic to the germ of the distribution underlying a holomorphic contact
structure?\end{question}

This work provides a complete answer to Question \ref{question}
in the negative. We construct the first example of a global fat $(4,6)$-distribution on $\R^6$ with two Reeb directions that does not support a complex contact structure, not even locally (i.e. on any open set around any point) nor up to diffeomorphism (see Theorem \ref{thm:global}).

We conclude that fat distributions with Reeb directions are not generally modelled on the complex contact case and they thus form a strictly broader subclass of distributions. Our result implies that this already occurs at the level of germs. This is consistent with A. Bhowmick's expectations \cite{Bhow2024}, as he comments  that the $1$-forms obtained in \cite{Ge} indicate the presence of function moduli. This \textit{existence} result can be phrased as Theorem \ref{thm: existence}. A detailed and expanded version is presented as Theorem \ref{thm:global} in Section \ref{sec:3}. 

\begin{theorem}\label{thm: existence}
    There exists a global fat $(4,6)$-distribution $(\R^6,\SD)$ with Reeb directions that does not support a complex contact structure anywhere, not even locally nor up to diffeomorphism.
\end{theorem}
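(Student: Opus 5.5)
The plan is to build $\SD$ explicitly as the kernel of two $1$-forms on $\R^6$ that perturb the real and imaginary parts of the standard complex contact form. To detect whether a complex contact structure is present, I will use a local invariant of the curvature map. Write $\SD=\ker\alpha_1\cap\ker\alpha_2$. The curvature is the vector-valued form $\Omega=(d\alpha_1|_{\SD},d\alpha_2|_{\SD})$. Fatness means that every nonzero combination $\lambda_1 d\alpha_1+\lambda_2 d\alpha_2$ is nondegenerate on $\SD$. In that case the pencil of $2$-forms on the $4$-dimensional space $\SD_p$ defines a quadratic (Pfaffian) form on the dual of $TM/\SD$,
\[
Q_p(\lambda)=\mathrm{Pf}\big(\lambda_1 d\alpha_1+\lambda_2 d\alpha_2\big|_{\SD_p}\big),
\]
which fatness forces to be definite. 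The complex contact structure is then a canonical datum. The endomorphism $J=\omega_1^{-1}\omega_2$ (with $\omega_i=d\alpha_i|_{\SD}$) satisfies a quadratic equation, and after normalising with the conformal structure determined by $Q$ it gives a canonical almost complex structure $J_p$ on $\SD_p$ and on $TM/\SD$. If $\SD$ underlies a holomorphic contact structure, then this $J$ must extend to an integrable complex structure on $M$ that preserves $\SD$. So the first step is to show that the complex structure on $\SD$ is uniquely determined by $\SD$ up to conjugation, since $J$ is determined by the pencil up to sign. It follows that the existence of a compatible complex contact structure near a point is equivalent to the vanishing of an explicit invariant: the Nijenhuis-type tensor of the almost complex structure on $TM$ obtained by extending $J$. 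The extension to directions transverse to $\SD$ is forced by requiring $\Omega$ to be complex bilinear. Since this invariant is intrinsic, its nonvanishing at every point rules out complex contact structures on every open set, and also up to diffeomorphism.

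Second, I will fix the Reeb condition. Following Bhowmick, Reeb directions are vector fields $R_1,R_2$ spanning a complement to $\SD$ with $\iota_{R_i}d\alpha_j|_{\SD}=0$. I will write down a family
\[
\alpha_1=dz_1-x_1dy_1+x_2dy_2+f\,(\cdots),\qquad \alpha_2=dz_2-x_1dy_2-x_2dy_1+g\,(\cdots),
\]
with the perturbation functions depending only on the $\SD$-coordinates. I will choose them so that $\partial_{z_1},\partial_{z_2}$ stay Reeb directions; independence of the $z$ variables gives $\mathcal{L}_{\partial_{z_i}}\alpha_j=0$, which yields the Reeb condition immediately. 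Fatness should hold for small perturbations on a neighborhood. To make it global on $\R^6$, I will instead choose perturbations that keep the Pfaffian form definite everywhere, for example a bounded function such as $\epsilon\sin$ or $\epsilon\arctan$ of one coordinate, with $\epsilon$ small.

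Third, I will compute the invariant from the first step for this family and show that it is nowhere zero. This is the main obstacle: making the obstruction explicit enough to compute, and choosing the perturbation so that the Nijenhuis tensor of the induced $J$ does not vanish at any point. I will first try a perturbation that breaks the complex structure in the simplest way, namely making $\omega_2$ fail to equal $\omega_1(J\cdot,\cdot)$ with a $J$ that varies non-holomorphically, say a conformal factor depending on $x_1$ in one form only. I will then check that $N_J(\partial_{x_1},\partial_{y_1})$ has a nonzero component, and make that component nowhere vanishing. Combining the three steps gives Theorem \ref{thm: existence}.
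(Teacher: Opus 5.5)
\paragraph{Verdict.} There is a genuine gap: your architecture is right, but the decisive step is only announced and the obstruction you name is not intrinsic as stated.

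Your overall plan matches the paper's. You use $z$-independent forms $\alpha_j=dz_j-\theta_j$, so that $\partial_{z_1},\partial_{z_2}$ are Reeb directions, and you use the \v{C}ap--Eastwood canonical almost complex structure as the obstruction. But the proposal stops where the proof begins. For an existence statement the proof \emph{is} the example. You never fix $f,g$, never verify fatness for a concrete choice, never compute $J$ on $\SD$ and on $Q$, and never evaluate the obstruction (``I will first try \dots\ I will then check \dots'').

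You also do not identify what the perturbation has to achieve. Suppose the root $t$ of $q_\SD$, taken with respect to the Reeb-normalised forms, is constant. Then $\Omega=d(\theta_1+t\theta_2)$ is a closed, decomposable complex $2$-form on $\R^4$ with $\Omega\wedge\bar\Omega\neq0$. Hence it is holomorphic symplectic for an integrable complex structure, and $\alpha_1+t\alpha_2$ is locally a genuine holomorphic contact form. So small perturbations as such prove nothing; $t$ must vary. Your ``conformal factor in one form'' also needs rephrasing. Rescaling a defining $1$-form does not change $\SD$, and multiplying all of $d\alpha_1$ by a non-constant function destroys closedness.

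In the paper's example this variation is exactly what happens. Its $q_\SD$ has off-diagonal entry $-(x_2^2+1)$, so $t=t(x_2)$. The paper computes $J$ explicitly and obtains a $\partial_{z_1}$-component $8x_2f(x_2)/\Delta^4$ of $S(\partial_{z_2},\nu)$, with $f\neq0$. This component vanishes on $\{x_2=0\}$, so your goal ``nowhere zero'' is more than needed: non-vanishing on a dense set suffices. The paper then globalises by the diffeomorphism $x_2\mapsto\tan(\pi x_2/2)$, not by small bounded perturbations.

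\paragraph{The lift problem in your first step.} Complex bilinearity of $\Omega$ involves only $J|_\SD$ and $J|_Q$, so it does not force the extension to $TM$. Two lifts differ by some $K\colon TM\to\SD$ with $K|_\SD=0$, and their Nijenhuis tensors differ. So the Nijenhuis tensor of ``the'' extension is not an invariant. Its non-vanishing for one lift says nothing about whether another lift is integrable, for instance the complex structure of a putative complex contact structure. \v{C}ap and Eastwood remove this freedom with condition iv) of Theorem \ref{theorem: 1}; that is what makes $S$ intrinsic.

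Alternatively, you may use only the $Q$-valued part of $N_J(u,v)$ for $u\in TM$, $v\in\SD$. Replacing $J$ by $J+K$ changes it by $\SL(Ku,Jv)-J\SL(Ku,v)=0$, so this part is lift-independent; for the canonical $J$ it equals $-2JS(u,Jv)$. Combine this with two facts: $(J|_\SD,J|_Q)$ is unique up to a common sign, and $N_{-J}=N_J$. Then non-vanishing of this part on an open set rules out complex contact structures there, also after diffeomorphism. The other components of $N_J$ do depend on the lift in general, so they cannot be used.

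\paragraph{Your route can be completed.} With these two repairs your approach works, and it even gives a global example directly. Take $\alpha_1=dz_1-e^{x_1}dy_1+x_2dy_2$ and $\alpha_2=dz_2-x_1dy_2-x_2dy_1$. Then:
\begin{itemize}
\item $q_\SD=\operatorname{diag}(2e^{x_1},2)$ with respect to $dx_1\wedge dx_2\wedge dy_1\wedge dy_2$, so the distribution is fat on all of $\R^6$;
\item $t=ie^{x_1/2}$;
\item $J\partial_{x_1}=e^{x_1/2}\partial_{x_2}$, $J\partial_{z_1}=e^{-x_1/2}\partial_{z_2}$ and $J\partial_{z_2}=-e^{x_1/2}\partial_{z_1}$;
\item for the lift given by the Reeb splitting, $N(\partial_{z_1},\partial_{x_1})=\tfrac12\partial_{z_1}$, which is nowhere zero.
\end{itemize}
None of this computation is in your proposal, and for an existence theorem it is the proof.
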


The answer to Question \ref{question} also yields consequences from the perspective of complex geometry. It is known that the complex structure on a $3$-dimensional complex contact manifold is solely determined by the underlying contact distribution \cite{CE2003}. Therefore, in line with the discussion above, answering in the negative Question \ref{question} shows that real fat $(4,6)$-distributions with Reeb directions need not determine an underlying complex structure on the manifold. Additionally, the answer to this question also yields implications at the level of horizontal immersions.

A. Bhowmick comments in \cite[p. 133]{Bhow2024}:

\begin{quote} ``Note that if the answer to the above question is in the affirmative, we can characterize germs of horizontal immersions, given by the $1$-jet prolongation of holomorphic maps $\mathbb{C}\to\mathbb{C}$''. 
\end{quote}

Thus, since we answer Question \ref{question} in the negative, this proposed characterization of germs of horizontal immersions as $1$-jet prolongation of holomorphic maps does not follow.

Our construction is subdivided into a number of steps. We first construct a semi-global structure (Theorem \ref{mainthm}) in Section \ref{sec:3}; i.e. a distribution on $\R^6$ that is fat only in a proper open subset of $\R^6$ and which possesses two Reeb directions. The proof that it does not support a complex contact structure relies on a result by A. Čap and M. Eastwood \cite{CE2003} that can be found in Section \ref{Sec:1}. Their work shows that every fat $(4,6)$-distribution on an oriented $6$-dimensional manifold $M$ canonically defines an almost complex structure $J:TM\to TM$. Moreover, the obstruction for this structure to being integrable and induced by a complex contact structure is measured by a tensor $S$. We will carefully examine this tensor for the particular case of our construction and we will show that it precludes the existence of an underlying complex structure. We will later globalise this construction in order to obtain a globally defined fat distribution with Reeb directions not supporting a complex contact structure anywhere (Theorem \ref{thm:global}).

\textbf{Acknowledgements:} The author would like to thank Álvaro del Pino and Ángel González-Prieto for useful comments. The author acknowledges support from PID2022-142024NB-I00 by MICINN (Spain).

\subsection*{Use of AI}
 The author used ChatGPT for proofreading,  computations (among other things, e.g. for finding the decomposition from Lemma \ref{lemma:dpsi}) and routine checks, which have all then been revised and verified by the author. 

\section{Further open questions}

The work \cite{Mont} of R. Montgomery implies that a generic $(4,6)$-distribution germ cannot admit a local framing generating a finite dimensional Lie algebra. This contrasts with the case of fat distribution germs \cite{Bhow2024}, which all have the complex Heisenberg Lie algebra as their nilpotentisation (see the work \cite{CFS} of S. Console, A. Fino and E. Samiou). 

Furthermore, as noted by A. Bhowmick in \cite{Bhow2024}, there exist fat distributions which are non-diffeomorphic to the complex contact one, although it was not known whether they admitted Reeb directions. Now that Theorem \ref{thm: existence} clarifies this issue and shows that there exist representatives of such distribution germs admitting Reeb directions, we can raise some related questions.

Possessing Reeb directions is not a homotopy-invariant property but it is a $\Diff$-invariant property. It would thus be interesting to understand further analytic and topological properties of the inclusion of the space of fat distribution-germs with Reeb directions within the class of all fat distribution-germs. 
\begin{figure}[h!]
	\centering
	\includegraphics[width=0.6\textwidth]{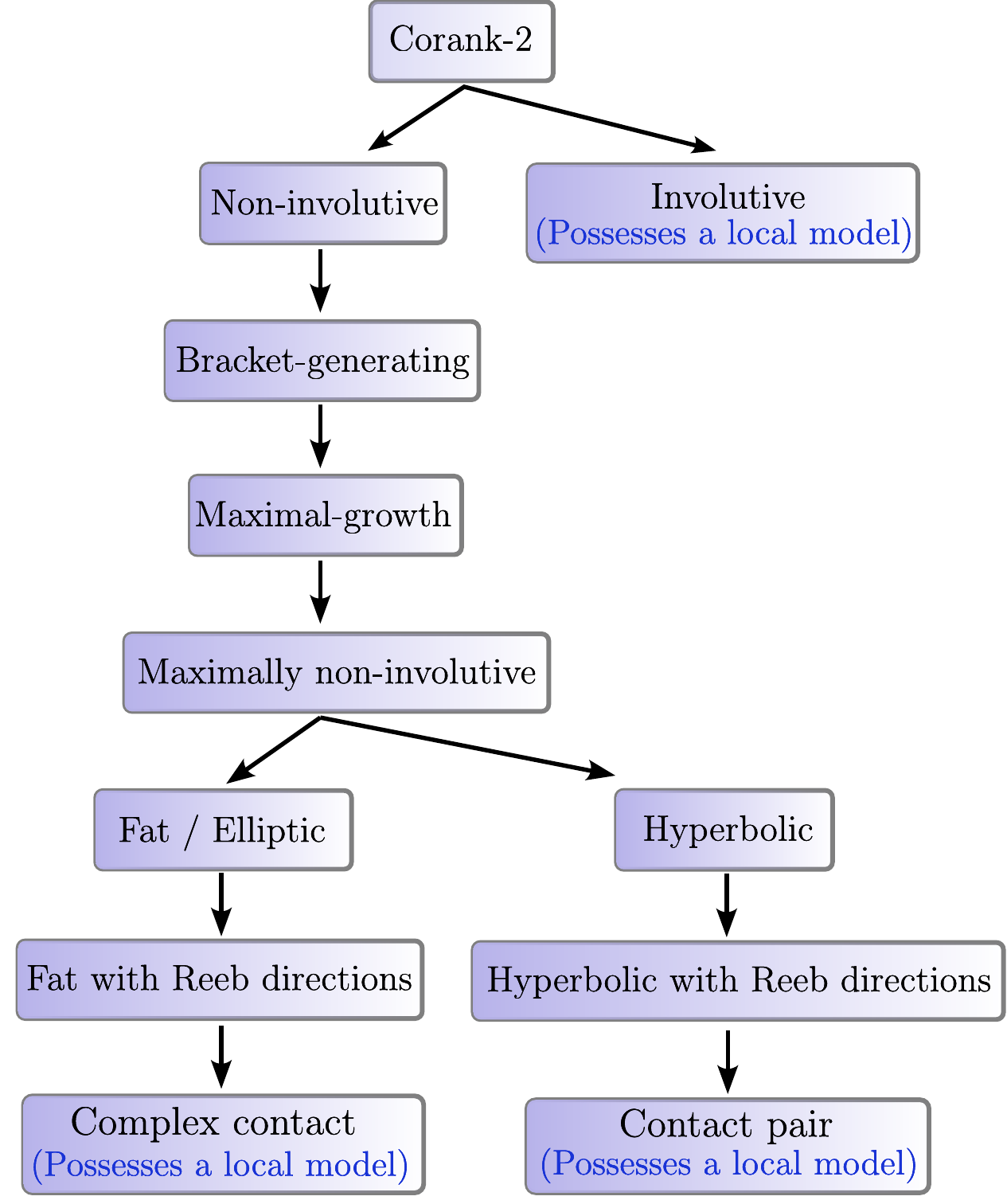}
	\caption{Hierarchy of corank-$2$ distribution-germs in dimension $6$ defined by $\operatorname{Diff}$-invariant conditions. Each vertical arrow in the tree indicates that the class on top contains the class below. By Theorem \ref{thm: existence} we know that Complex contact germs and Fat-germs with Reeb directions constitute distinct classes. Note that the subdivision is not exhaustive; i.e. we just depict the main germ-types discussed in the article.}
\label{fig:hierarchy}
\end{figure}

Figure \ref{fig:hierarchy} showcases a tree depicting different classes of $(4,6)$ distribution-germs defined by $\operatorname{Diff}$-invariant conditions, ordered by inclusion. Each vertical arrow indicates that the class on top contains the class below. Theorem \ref{thm: existence} implies that not every fat-distribution germ with Reeb directions is locally diffeomorphic to an element in the class directly below (i.e. complex contact). It would be interesting to understand analytic properties for the inclusion represented by the arrow directly above; i.e. for the space of fat distribution-germs with Reeb directions within the space of all fat distributions.
\begin{remark}
    Note that the subdivision in Figure \ref{fig:hierarchy} is not exhaustive. There are classes of germs not represented in the tree such as e.g. the class of parabolic germs or any class of distribution germs with Reeb directions that are not fat nor hyperbolic, among other classes. We just depict the main germ-types discussed in the article.
\end{remark}

It is of great interest to understand as well the analogous question that motivated this work but in the hyperbolic setting. The product of two real-contact germs, often called the \textit{flat} hyperbolic-germ in the literature (see \cite{CE2003}), represents the notion analogous to the complex contact structure germ but in the hyperbolic setting \cite{CE2003}. Note that both are called, respectively, the flat model (in the elliptic and hyperbolic settings, respectively) \cite{CE2003} and they both admit two Reeb directions (see Example \ref{ex: complex-contact} and Example \ref{ex:productcontact} below). To the best of the author's knowledge, it is not known whether the class of hyperbolic germs with Reeb directions coincides with the class of germs arising as the product of two real-contact germs. We leave this question as an interesting open question to be explored.

The question above is closely related to the local model of contact-pairs $(\alpha,\beta)$, which constitute a rich geometric structure (see the work \cite{BA} by G. Bande and A. Hadjar). Indeed, contact pairs $(\alpha,\beta)$ of $(1,1)$-type on a $6$-dimensional manifold define a distribution $\SD:=\ker(\alpha)\cap\ker(\beta)$ (called the \textit{characteristic distribution} in \cite{pairs}) satisfying that all of its germs are locally diffeomorphic to the flat hyperbolic-germ (see \cite[Theorem 3.1]{BA}). Thus, it would be interesting to understand whether this represents the local model of any hyperbolic distribution with Reeb directions or not.

Finally, it would be of interest to find further classes of fat germs defined by a $\operatorname{Diff}$-invariant condition that lie in between the classes of Fat-germs and Fat-germs with Reeb directions or in between the latter and the subclass of Complex contact germs. The analogous question is worth exploring in the hyperbolic setting as well; i.e. it would be interesting to find a class of hyperbolic germs defined by a $\operatorname{Diff}$-invariant condition that lies in between the classes of Hyperbolic germs and the class of Hyperbolic germs with Reeb directions (in case they were not the same).

\section{Geometry of corank-$2$ distributions in dimension $6$}\label{Sec:1}

We will introduce some features of the geometry and topology of smooth corank-$2$  distributions in dimension $6$. Fix a smooth $6$-manifold $M$ endowed with a rank-$4$ distribution $\SD$ and write $Q:=TM/\SD$. 

\begin{definition}
    The \textbf{curvature} (often called \textbf{Levi map}) associated to $\SD$ is the following well-defined vector bundle homomorphism:
\begin{equation*}
		\begin{array}{rccl}
		\SL \colon & \bigwedge^2 \SD & \longrightarrow &  Q \\
		& u\wedge v& \longmapsto &[u,v] \mod \SD
		\end{array}
		\end{equation*}
\end{definition}

The curvature $\SL$ measures how far is $\SD$ from being involutive. In particular, by Frobenius' Theorem, it is a foliation if and only if this morphism is zero. Otherwise, the $4$-distribution is non-involutive. If, additionally, $\SL$ is fibrewise surjective, we say that $\SD$ is of \textbf{maximal growth} (see \cite{MA2025}). On the other hand, a distribution can also be \textbf{bracket-generating}, which is a notion stronger than being non-involutive but weaker than being of maximal growth. See \cite[Sec 1.3]{MAdP2022} for a precise definition.

We can rather work with forms as well, passing to the corresponding dual picture. In particular, we can define the following bundle homomorphism which we call the dual curvature.

\begin{definition}
We call the \textbf{dual curvature} to the bundle homomorphism defined as:
\begin{equation*}
		\begin{array}{rccl}
		\omega\colon & Q^* & \longrightarrow &  \bigwedge^2\SD^* \\
		& \alpha& \longmapsto &-\alpha\circ\SL=d\alpha|_{\SD}
		\end{array}
		\end{equation*}
\end{definition}

\begin{remark}
There is a natural identification between $Q^*$ and the annihilator $\SD^\perp:=  \{\alpha\in T^*M: \alpha|_\SD =0\}$. Thus, we may rather write $\omega:\SD^\perp\to\bigwedge^2\SD^*$ instead.
\end{remark}

\begin{remark}\label{Remark:duality}
Note that $d\alpha|_\SD(X,Y) = -\alpha([X,Y])$ for any two horizontal vector fields $X, Y$, which justifies the equality $-\alpha\circ\SL=d\alpha|_{\SD}$.
\end{remark}

By identifying $\Hom(\bigwedge^2\SD,Q)$ with $(\bigwedge^2\SD^*)\otimes Q$, we can abuse notation and write $\SL\in\Gamma\left((\bigwedge^2\SD^*)\otimes Q\right)$. We can also consider the morphism $\SL\wedge \SL \in\Gamma\left(\bigwedge^4 \SD^*\otimes \Sym^2Q\right)$ which, upon fixing a volume form $\kappa\in\bigwedge^4 (\SD^*)$, defines a quadratic form $q_\SD\in\Sym^2(Q)$ on $Q^*$. More specifically, for a fixed $x\in M$ and given $\psi,\phi\in Q^*_x$, we have the quadratic form defined as:
\begin{equation}\label{qd}
    (\psi\circ\SL_x)\wedge(\phi\circ\SL_x)=q_\SD(\psi,\phi)\kappa_x.
    \end{equation}

\begin{definition}
    The distribution $(M^6, \SD^4)$ is called \textbf{maximally non-involutive} if the quadratic form $q_\SD$ is non-degenerate. Additionally, let us identify two distinguished classes that represent the generic cases \cite{dZ}. We say that:
    \begin{itemize}
        \item $(M,\SD)$ is \textbf{fat} or \textbf{elliptic} if $q_\SD$ is definite.
        \item $(M,\SD)$ is \textbf{hyperbolic} if $q_\SD$ is indefinite.
    \end{itemize}
\end{definition}

\begin{remark}
    Being fat is a more general notion not exclusive of dimension $6$. Nonetheless, in this ambient dimension being elliptic and fat are equivalent notions and we will use both terms interchangeably. See, e.g. \cite[Def. 2.2]{Bhow2024} for a more general definition of fatness.
\end{remark}

Corank-$2$ distributions of maximal growth in dimension $6$ abide by a complete $h$-principle \cite{MAdP2022, MA2025}. Nonetheless, if we restrict ourselves to the maximally non-involutive components, then some other form of analysis may be required in order to determine global homotopical properties.

The space of hyperbolic corank-$2$ distributions in dimension $6$ is known to abide by a complete $h$-principle by recent work of the author jointly with Á. del Pino by the use of a novel technique based on convex integration called \emph{Convex integration with avoidance}, \cite{MAdP2022}. Thus, the homotopical classification of such distributions reduces to their formal data; i.e. they are governed by the underlying algebro-topological information. 

The space of elliptic distributions is not so well understood and it was conjectured that it might display some rigidity at the global level \cite[Sec. 1.4.7]{MAdP2022}.  Prelegendrian submanifolds for such kind of distributions 
have been shown to yield some rigid behaviour at the global level \cite{FdPZ} and horizontal submanifolds, on the other hand, have been shown to yield
some form of flexibility \cite{Bhow2024, BhDa} at the local level. In that respect, fat $4$-distributions in dimension $6$ admitting Reeb directions yield a special subclass of distributions with significant analytic properties. Let us introduce this notion first.

\begin{definition}[\cite{Bhow2024}]\label{Def:ReebDirections}
Let $(M,\SD)$ be a corank-$2$ distribution. We say that it admits two local \textbf{Reeb directions} $Z_1, Z_2$ if $\SD$ can be locally expressed as $\SD=\ker(\lambda_1)\cap\ker(\lambda_2)$ where:
\begin{itemize}
    \item[i)] $TM=\SD\oplus\langle Z_1,Z_2\rangle$,
    
    \item[ii)] $\lambda_i(Z_j) = \delta(i,j)$ for $i,j\in\{1,2\}$. Here $\delta(i,j)$ denotes Kronecker's delta,
    
    \item[iii)] $\iota_{Z_i} d\lambda_j|_\SD = 0$ for $i,j = 1,2$ and 
    \item[iv)] $[Z_1,Z_2] = 0$.
\end{itemize}
\end{definition}

A prototypical example of a fat-distribution with Reeb directions is the real distribution underlying a complex contact structure.

\begin{example}[Complex contact germ] \label{ex: complex-contact}
    Let $(\mathbb{C}^3,\xi_{\std})$ be the standard complex contact structure on $\mathbb{C}^3$; i.e. $\xi_\std=\ker(dz-ydx)$ where $(x,y,z)$ are holomorphic coordinates. Identify $\mathbb{C}^3$ with $\R^6$ and thus write $z=z_1+i\cdot z_2$,  $x=x_1+i\cdot x_2$, $y=y_1+i\cdot y_2$. The holomorphic $1$-form $\alpha=dz-ydx$  then corresponds to $\alpha=(dz_1+i\cdot dz_2)-(y_1+i\cdot y_2)(dx_1+i\cdot dx_2)=(dz_1-y_1dx_1+y_2dx_2)+i\cdot(dz_2-y_1dx_2-y_2dx_1)$. Equivalently, this last equality can be rewritten as $\alpha=\alpha_1+i\cdot\alpha_2$, where
    \begin{align*}
    \alpha_1&=dz_1-y_1dx_1+y_2dx_2, \\
    \alpha_2&=dz_2-y_1dx_2-y_2dx_1.
    \end{align*}
    The real fat-distribution germ defined as $(\R^6,\SD:=\ker(\alpha_1)\cap\ker(\alpha_2))$ is of elliptic/fat type \cite[Example 2.3]{Bhow2024} and is often called the \textit{flat} elliptic/fat germ since it is the only germ (up to diffeomorphism) that makes the tensor S from Eq. (\ref{eq: S}) vanish \cite{CE2003}. Note that this germ admits two Reeb directions  given by $Z_1=\partial_{z_1}$ and $Z_2=\partial_{z_2}$.
\end{example}

\begin{remark}\label{Remark: Darboux}
    All complex contact manifolds of the same dimension are locally contactomorphic by the holomorphic Darboux Theorem \cite{AFL}. Therefore, it follows that all $(4,6)$-distribution germs associated to a complex contact manifold are diffeomorphic. In particular, the model from Example \ref{ex: complex-contact} is often called the holomorphic Darboux local model.
\end{remark}

Another substantially different example of a $(4,6)$-distribution germ with Reeb directions corresponds to the hyperbolic flat-model (see \cite{CE2003}) or, equivalently, the product of two real contact distribution-germs.

\begin{example}[Product of real-contact germs]\label{ex:productcontact}
Let $(\R^3,\xi_\std=\ker(dz-ydx))$ be the standard contact structure on $\R^3$. Note that the Cartesian product with itself yields the $(4,6)$-distribution $(\R^6=\R^3_{x_1,y_1,z_1}\times\R^3_{x_2,y_2,z_2}, \SD=\ker(\alpha_1)\cap\ker(\alpha_2))$, where
\begin{align*}
    \alpha_1&=dz_1-y_1dx_1\\
    \alpha_2&=dz_2-y_2dx_2.
\end{align*}
Its associated germ is known as the \textit{flat} hyperbolic-germ \cite{CE2003}. It admits two Reeb directions given by $Z_1=\partial_{z_1}$ and $Z_2=\partial_{z_2}$.
\end{example}

\begin{remark}
All real contact manifolds are locally diffeomorphic by Darboux's Theorem \cite[Thm. 2.5.1]{Geiges}. Therefore, it follows that germs arising as the product of real-contact manifolds are all diffeomorphic to each other as well (by applying suitable local diffeomorphisms on each of the factors individually).
\end{remark}

The next result, due to A. Čap and M. Eastwood, shows that every fat $(4,6)$-distribution canonically yields an associated almost complex structure. Furthermore, the obstruction for this structure to being integrable and induced by a complex contact structure is measured by a tensor $S$. 

\begin{theorem}[A. Čap \& M. Eastwood, \cite{CE2003}]\label{theorem: 1}
    Let $(M,\SD)$ be a fat distribution on an oriented $6$-dimensional manifold $M$. There exists a unique almost complex structure $J:TM\to TM$ characterised by the following properties:

\begin{itemize}
    \item[i)] $J(\SD)= \SD$; i.e. it preserves $\SD$,
    \item[ii)] $J$ induces on $M$ the same given orientation.
    \item[iii)] The curvature $\SL:\SD\times\SD\to Q$ is complex bilinear for the induced structures; i.e. $
    [u,v]+J[Ju,v]\in\Gamma(\SD) \text{ for every }u,v\in\Gamma(\SD).$
    \item[iv)] $[u,v]+J[Ju,v]-J[u,Jv]+[Ju,Jv]\in\Gamma(\SD)$ for every $u\in\Gamma(TM)$ and every $v\in\Gamma(\SD)$.
\end{itemize}
Additionally, $J$ is integrable if and only if the tensor $S:Q\otimes \SD\to Q$ induced by the following morphism vanishes:
\begin{equation}\label{eq: S}
    S(u,v)=[u,v]+J[Ju,v] \mod \SD \quad \text{ for }u\in\Gamma(TM)\text{ and } v\in\Gamma(\SD).
\end{equation}
    
\end{theorem}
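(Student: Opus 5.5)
The plan is to construct $J$ in three layers: first on $\SD$, then on $Q$, and finally on all of $TM$. After that, the integrability statement is read off from the Nijenhuis tensor.

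\emph{Step 1: linear algebra at a point.} Fix $x\in M$. The image $P:=\omega(Q_x^*)\subset\bigwedge^2\SD_x^*$ is a $2$-plane. Fatness says that the wedge pairing (\ref{qd}) is definite on $P$. So we may choose a basis $\omega_1,\omega_2$ of $P$ with
\[
\omega_1\wedge\omega_1=\omega_2\wedge\omega_2\neq 0, \qquad \omega_1\wedge\omega_2=0 .
\]
Both forms are then symplectic on $\SD_x$. Set $A:=\omega_1^{-1}\circ\omega_2\in\operatorname{End}(\SD_x)$. I will check, in a normal form for $\omega_1$, that these wedge relations force $A^2=-c\,\Id$ with $c>0$. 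Put $J_\SD:=\pm c^{-1/2}A$. A direct check shows this is independent of the choice of basis of $P$, up to sign.

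The dual action of $J_\SD$ preserves $P$, and through $\omega$ this induces a complex structure $J_Q$ on $Q_x$. With this pair, $\SL(J u,v)=J_Q\SL(u,v)$, which is condition iii). Conversely, any $J$ satisfying iii) must make $P$ complex-linear in this sense, and that forces $J|_\SD=\pm J_\SD$. Condition ii) then fixes the sign: the two choices give $J_\SD\oplus J_Q$ and $(-J_\SD)\oplus(-J_Q)$, and these induce opposite orientations on $M$ because $\dim_{\mathbb C}=3$ is odd.

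\emph{Step 2: extension to $TM$.} Choose a complement $TM=\SD\oplus H$. Any $J$ preserving $\SD$ and inducing $J_\SD$ and $J_Q$ has the form $J_0+B$. Here $J_0$ is the block-diagonal lift, and $B\in\Hom(H,\SD)$ is constrained by $J^2=-1$ to anticommute appropriately, i.e. $J_\SD B+BJ_0|_H=0$. Now insert this into the expression of condition iv). It is tensorial in $u$ and $v$, since the non-tensorial terms cancel, much as in the Nijenhuis tensor. Taking $u\in\Gamma(\SD)$, condition iv) follows from iii). Taking $u\in\Gamma(H)$, it becomes an affine equation in $B$ of the form
\[
\SL\big(\,\cdot\,, v\big)\text{-terms in }B \;=\; \text{known tensor}.
\]
Nondegeneracy of $\SL$ in the fat case should make the linear part an isomorphism on the relevant space of $B$'s. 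I expect this to split into $J$-linear and $J$-antilinear parts, with only one part being solvable and forced. This gives existence and uniqueness of $B$, and hence of $J$, independently of $H$. This is the step I expect to be the main obstacle. It requires identifying exactly which components of $B$ are seen by iv) and verifying that the resulting linear map is invertible, and I would do this in the flat model of Example \ref{ex: complex-contact} first.

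\emph{Step 3: integrability.} Write the Nijenhuis tensor as
\[
N(u,v)=[u,v]+J[Ju,v]+J[u,Jv]-[Ju,Jv].
\]
I decompose it according to $\SD\times\SD$, $TM\times\SD$ and $\SD\oplus Q$ components. Conditions iii) and iv) show that the $Q$-components of $N$ on $\SD\times\SD$ vanish. On $TM\times\SD$, the $Q$-component of $N$ equals $S(u,v)-J S(Ju,v)$. Using iv), this reduces to a multiple of $S$, so $N=0$ forces $S=0$.

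For the converse, assume $S=0$. I then use the Jacobi identity together with nondegeneracy of $\SL$ to show that the remaining $\SD$-valued components of $N$ vanish. The idea is that these components are determined by the $Q$-valued ones via $\SL(N(u,v),w)$-type identities, obtained by bracketing with a third horizontal field $w$. Then $N\equiv 0$, and the Newlander–Nirenberg theorem gives integrability. Finally, I check that $S$ is tensorial: it is $C^\infty$-linear in $v$ by iii), and depends on $u$ only modulo $\SD$ by iii) again.
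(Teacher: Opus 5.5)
Your overall route is sound. For existence and uniqueness it is the construction the paper recalls in Subsection~\ref{subsection:J}; the paper itself gives no proof of this theorem and defers to \v{C}ap--Eastwood. Your $A=\omega_1^{-1}\circ\omega_2$ repackages the complex roots $\psi$ of $q_\SD$, since $\ker(\omega_1\pm i\,\omega_2)$ are the $\pm i$-eigenspaces of $A$, and your $J_0+B$ is the paper's $\tilde J+K$. Step~1 is fine.

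However, Step~2 is where existence and uniqueness are actually decided, and you leave it as an expectation. It closes in a few lines. For $u\in\Gamma(H)$ and $v\in\Gamma(\SD)$, the $B$-dependent part of the expression in iv) is $J\SL(Bu,v)+\SL(Bu,Jv)=2J\SL(Bu,v)$ by iii). So iv) is equivalent to $\SL(Bu,\cdot)=\tfrac12 J\,T_0(u,\cdot)$, where $T_0$ is the expression in iv) computed with $J_0$. Using only $J_0^2=-\Id$ one checks $T_0(u,Jv)=JT_0(u,v)$ and $T_0(J_0u,v)=-JT_0(u,v)$. By iii) and fatness, $w\mapsto\SL(w,\cdot)$ is an injection of $\SD$ into $\Hom_{\mathbb C}(\SD,Q)$. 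Both have real dimension $4$, so it is an isomorphism. Hence $Bu$ exists and is unique, and the second identity makes $B$ automatically antilinear, so $J^2=-\Id$ comes for free. No component is unsolvable.

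In the paper's notation this reads $\SL(Ku,v)=\tfrac12\bigl(J\tilde S(u,v)+\tilde S(u,Jv)\bigr)$. That is $-J$ times the right-hand side of Eq.~(\ref{eq:LKuv}) as printed. Replacing $\tilde J$ by $\tilde J+C$ shows that your version gives a $J$ independent of the extension, while the printed one does not.

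In Step~3 the identity you state is wrong. Since $S(Ju,v)=-JS(u,v)$ for any almost complex $J$, the quantity $S(u,v)-JS(Ju,v)$ vanishes identically. The correct bookkeeping is $N(u,v)\equiv S(u,v)+JS(u,Jv)$, while the expression in iv) equals $S(u,v)-JS(u,Jv)$. So iv) gives $N\equiv 2S \bmod \SD$ on $TM\times\SD$, which repairs the direction ``integrable $\Rightarrow S=0$''.

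The converse carries the real content, has no counterpart in the paper, and your sketch also skips the $H\times H$ block of $N$. Both are handled cleanly after complexifying. Let $T^{0,1}=\ker(J-i)$, $\SD^{0,1}=\SD_{\mathbb C}\cap T^{0,1}$, $\SD^{1,0}=\overline{\SD^{0,1}}$ and $Q^{1,0}=\ker(J_Q+i)$.

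Condition iii) gives $\SL(\SD^{1,0},\SD^{0,1})=0$ and $\SL(\bigwedge^2\SD^{1,0})\subset Q^{1,0}$. For $X\in T^{0,1}$, $S(X,Y)$ is twice the $Q^{1,0}$-component of $[X,Y]$. So $S=0$ says exactly that brackets of sections of $T^{0,1}$ with sections of $\SD_{\mathbb C}$ have no $Q^{1,0}$-component.

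Since $T^{0,1}/\SD^{0,1}$ is a line, $\bigwedge^2T^{0,1}=T^{0,1}\wedge\SD^{0,1}$. This is the complex form of the missing $H\times H$ reduction. It shows that $N\colon\bigwedge^2T^{0,1}\to T_{\mathbb C}M/T^{0,1}\cong T^{1,0}$ takes values in $\SD^{1,0}$.

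Now take $X,Y\in\Gamma(T^{0,1})$ and $W\in\Gamma(\SD^{1,0})$, and compare $Q^{1,0}$-components in $[[X,Y],W]=[X,[Y,W]]-[Y,[X,W]]$.

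On the right, the inner brackets lie in $T^{0,1}\oplus\SD^{1,0}$. Bracketing a section of $T^{0,1}$ with either summand produces no $Q^{1,0}$-component, by the two facts just established.

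On the left, write $[X,Y]=C+N(X,Y)$ with $C\in\Gamma(T^{0,1})$. Then $[C,W]$ contributes nothing and $[N(X,Y),W]$ contributes $\SL(N(X,Y),W)$. Hence $\SL(N(X,Y),\SD^{1,0})=0$.

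The map $\SL\colon\bigwedge^2\SD^{1,0}\to Q^{1,0}$ is nonzero, since otherwise iii) and conjugation would force $\SL=0$. So $N(X,Y)=0$, and Newlander--Nirenberg finishes the proof.
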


The next result is a direct consequence of Theorem \ref{theorem: 1} (see \cite{CE2003}).

\begin{theorem}\label{thm:LocalComplexContact}
    Let $(\mathcal{U},\SD)$ be the germ of a fat distribution in dimension $6$. Then, the tensor $S$ is the only obstruction for the germ to be induced by a complex contact structure; i.e.  $(\mathcal{U},\SD)$ is induced by a complex contact structure if and only if the tensor $S$ vanishes.
    \end{theorem}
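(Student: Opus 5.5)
The plan is to prove the two implications separately, using Theorem \ref{theorem: 1} for the hard direction and the holomorphic Darboux theorem (Remark \ref{Remark: Darboux}) for the easy one. Throughout we fix an orientation on $\mathcal{U}$, which we may do after shrinking to a contractible neighbourhood.

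\emph{Necessity.} Suppose $\SD$ is the real distribution underlying a complex contact structure on $\mathcal{U}$ with complex structure $J_0$. Then $J_0$ is integrable. It preserves $\SD$, since $\SD$ is a complex subbundle, and it induces the complex orientation. The curvature $\SL$ is complex bilinear, because $\SL$ is the real part of the holomorphic Levi form of a holomorphic subbundle. For property iv) we use that $[u,v]$ is complex linear in each slot modulo $\SD$ when $u,v$ are holomorphic sections, and then extend by tensoriality. It is cleanest to verify all of this in the Darboux model of Example \ref{ex: complex-contact}, with $\SD=\langle \partial_x+y\partial_z,\partial_y\rangle_{\mathbb{C}}$. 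There one computes directly that i)--iv) hold for the standard $J_0$, and that $S(u,v)=[u,v]+J_0[J_0u,v] \bmod \SD$ vanishes on the frame. By the uniqueness part of Theorem \ref{theorem: 1} (after possibly reversing the orientation so that ii) matches), $J=J_0$, so $S=0$. Since $S$ is a tensor, checking it on a frame suffices, and diffeomorphism invariance of the construction transports the result to any complex contact germ.

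\emph{Sufficiency.} Suppose $S\equiv 0$. By Theorem \ref{theorem: 1}, $J$ is integrable, so by Newlander--Nirenberg $\mathcal{U}$ becomes a complex $3$-manifold.

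\begin{itemize}
\item We first show that $\SD$ is a holomorphic subbundle. It is $J$-invariant by i), so it is a complex subbundle. For holomorphicity we must show that $\SD^{1,0}$ is closed under brackets with $T^{0,1}$ modulo $\SD^{0,1}$, i.e. that $\bar\partial$ preserves it. Condition iv) with $u\in\Gamma(TM)$, $v\in\Gamma(\SD)$ says exactly that $[u,v]$ modulo $\SD$ is $J$-bilinear. Together with $S(u,v)=0$, i.e. $[u,v]+J[Ju,v]\in\SD$, this gives $[X^{0,1},v^{1,0}]\in \SD\otimes\mathbb{C}$ modulo terms in $\SD^{1,0}$. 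This is the involutivity of $\SD^{1,0}\oplus T^{0,1}$, which is equivalent to $\SD$ being holomorphic.
\item Locally $\SD=\ker\theta$ for a holomorphic $1$-form $\theta$ with values in the line bundle $TM/\SD$. Condition iii) says the Levi form is complex bilinear, so it is the holomorphic bracket. Fatness means $q_\SD$ is definite, hence the complex Levi form $\bigwedge^2_{\mathbb{C}}\SD\to Q$ is nondegenerate, i.e. $\theta\wedge d\theta\neq0$. Therefore $\SD$ is a complex contact structure.
\end{itemize}

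The main obstacle is the first bullet: translating iv) together with $S=0$ into the Frobenius condition for $\SD^{1,0}\oplus T^{0,1}$. The bookkeeping of which bracket components land in $\SD^{1,0}$, $\SD^{0,1}$, or $Q$ must be done carefully by writing $u=u^{1,0}+u^{0,1}$. The remaining steps are formal consequences of Theorem \ref{theorem: 1}.
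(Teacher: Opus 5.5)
Your overall architecture is sound, and you are supplying more than the paper does: the paper gives no argument for this statement, only recording it as a direct consequence of Theorem~\ref{theorem: 1} with a reference to \cite{CE2003}. Your necessity direction and your final bullet are fine. That final bullet obtains a holomorphic contact form from fatness and complex bilinearity of $\SL$.

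The gap is in the step you yourself flag as the main obstacle. The claim that holomorphicity of $\SD$ ``is the involutivity of $\SD^{1,0}\oplus T^{0,1}$'' is false. Involutivity of $E\oplus T^{0,1}$ characterises holomorphic \emph{foliations}. Here it would also force $[\SD^{1,0},\SD^{1,0}]\subset\SD^{1,0}\oplus T^{0,1}$. That means the complex Levi form $\bigwedge^2\SD^{1,0}\to Q^{1,0}$ would vanish, contradicting fatness and the very contact condition you prove in the next bullet. Likewise, iv) does not say that $[u,v] \bmod \SD$ is $J$-bilinear. It is equivalent to $S(u,Jv)=-JS(u,v)$, a normalisation that plays no role once $S=0$ is assumed. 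As written, your derivation of holomorphicity does not go through.

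The repair is a one-line computation that uses only $S=0$. Take $T^{1,0}$ to be the $+i$-eigenbundle of $J$. Complexify $S$ and evaluate it on $\bar X\in\Gamma(T^{0,1})$ and $V\in\Gamma(\SD^{1,0})$. Since $J\bar X=-i\bar X$, write $[\bar X,V]=A+B$ pointwise with $A\in T^{1,0}$ and $B\in T^{0,1}$. Then
\[
S(\bar X,V)=[\bar X,V]-iJ[\bar X,V]=2A \bmod \SD\otimes\mathbb{C}.
\]
Because $\SD$ is $J$-invariant, $(\SD\otimes\mathbb{C})\cap T^{1,0}=\SD^{1,0}$, so $S=0$ gives $\pi^{1,0}[\bar X,V]\in\SD^{1,0}$. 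For integrable $J$, holomorphic coordinates give $\pi^{1,0}[\bar X,V]=\bar\partial_{\bar X}V$. Hence $\bar\partial$ preserves $\Gamma(\SD^{1,0})$, which is exactly the criterion for $\SD^{1,0}$ to be a holomorphic subbundle. This criterion constrains only the mixed brackets $[T^{0,1},\SD^{1,0}]$, not $[\SD^{1,0},\SD^{1,0}]$. With this replacement, the rest of your proof stands.
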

   
\subsubsection{Description of the almost complex structure $J:TM\to TM$.}\label{subsection:J}
Let us describe how the almost complex structure $J:TM\to TM$ in Theorem \ref{theorem: 1} is defined.  This will be relevant for further discussions. We follow the exposition from \cite[pp. 94-95]{CE2003}.

Since $(M,\SD)$ is fat, then $q_\SD$ is definite. Equivalently, for fixed $x\in M$, there does not exist any non-zero $\psi\in Q_x^*$ such that  $q_\SD(\psi,\psi)=(\psi\circ\SL_x)\wedge(\psi\circ\SL_x)$ vanishes. This means that if we regard $q_\SD$ as a quadratic polynomial, it does not possess real roots. Nonetheless, it does possess two complex (conjugate) roots (well defined up to complex scale). In other words, there are two (conjugate) elements $\psi_1,\psi_2\in Q^*_x\otimes\mathbb{C}$ such that $q_\SD(\psi_i,\psi_i)=(\psi_i\circ\SL_x)\wedge(\psi_i\circ\SL_x)$ is the zero element in $\bigwedge^4(\SD^*)\otimes\mathbb{C}$ for $i=1,2$. By the Plücker criterion, $\psi_i\circ\SL\in\bigwedge^2\SD^*\otimes\mathbb{C}$ are simple as $2$-forms. Take one of the two roots $\psi\in Q^*_x\otimes\mathbb{C}$ and note the following two facts:
\begin{itemize}
    \item[i)] Since $\psi\in Q^*_x\otimes\mathbb{C}$ yields a simple $2$-form $\psi\circ\SL\in\bigwedge^2\SD^*\otimes\mathbb{C}$, it then defines a complex $2$-plane within $\SD^*\otimes\mathbb{C}$. In turn, this complex plane defines a complex structure $J:\SD_x\to\SD_x$. See Subsection \ref{JD} for further details and some example of application.

    \item[ii)] On the other hand, $\psi\in Q^*_x\otimes\mathbb{C}$  readily identifies $Q_x$ with the complex numbers $\mathbb{C}$. Thus, this identification endows $Q_x$ with a complex structure $J:Q_x\to Q_x$ in the obvious way; i.e. $J:{Q_x}\to Q_x,\ v\mapsto\psi^{-1}\left(i\cdot(\psi(v))\right)$.
\end{itemize}

Moreover, note that by construction these complex structures do not change under complex multiplication of $\psi$. They are thus uniquely determined once one of the two (conjugate) roots has been chosen. The other root yields $-J$ instead of $J$ but, since $M$ was oriented from scratch, there is thus a unique canonically defined $J$ which is the one that induces the orientation of $M$.
\begin{remark}\label{signJ}
    Note that, by the bilinearity of the Lie bracket, both $J$ and $-J$ induce the same tensor (see Eq. (\ref{eq: S})).
\end{remark}
This construction thus yields  complex structures on $\SD_x$ and $Q_x$  inducing the orientation of $M$ such that
\begin{equation}\label{ec:EcuacionClave}
    \SL(u,v)+J\SL(Ju,v)=0\quad\text{for }u,v\in\Gamma(\SD).
\end{equation}

Choose now any extension of these almost complex structures to an almost complex structure $\tilde{J}:TM\to TM$ and define the tensor $\tilde{S}:Q\otimes\SD\to Q$ induced by:

    \begin{equation}\label{eq: Stilde}
    \tilde{S}(u,v)=[u,v]+ J[\tilde{J}u,v] \mod \SD \quad \text{ for }u\in\Gamma(TM)\text{ and } v\in\Gamma(\SD).
\end{equation}

As noted in \cite{CE2003}, $\tilde{J}$ satisfies the first three conditions $i), ii), iii)$ from the statement of Theorem \ref{theorem: 1}. Nonetheless, \cite{CE2003} also notes that $\tilde{S}$ depends on the chosen extension $\tilde{J}$ and, so, we will proceed as follows in order to eliminate that dependence and thus make the construction canonical. For a fixed choice $u\in TM$, consider the following complex linear map:

\begin{equation*}
		\begin{array}{rccl}
		h\colon & \SD & \longrightarrow &  Q \\
		& v& \longmapsto & \dfrac{-\tilde{S}(u,v)+J\tilde{S}(u,Jv)}{2}
		\end{array}
		\end{equation*}

Since $\SL$ is non-degenerate, there exists a unique element $Ku\in\SD$ so that:
\begin{equation}\label{eq:LKuv}
    \SL(Ku,v)=\frac{-\tilde{S}(u,v)+J\tilde{S}(u,Jv)}{2}\quad\text{ for }u\in\Gamma(TM)\text{ and }v\in\Gamma(\SD).
\end{equation}

Therefore, we have now defined the homomorphism $K:TM\to \SD$. And, finally, $J=\tilde{J}+K$ is the almost complex structure $J:TM\to TM$ from Theorem \ref{theorem: 1} which is characterised by conditions $i)$ to $iv)$. Note that we have merely limited ourselves to describing its construction but the interested reader may check \cite{CE2003} for a proof and further details.

\section{A fat distribution with Reeb directions that is not complex contact}\label{sec:3}

The present section is devoted to the construction of a fat $(4,6)$-distribution with two Reeb directions that does not support a complex contact structure anywhere, not even locally nor up to diffeomorphism. We first state Theorem \ref{mainthm}, which does the work since it provides such an example in the region $\{|x_2|<1\}$ of $\R^6$. We will break its proof into several lemmas and propositions that we will state and prove all along the section. 

Finally, we will show how this construction can be globalised; i.e. we can find a global distribution on the whole $\R^6$ satisfying conditions $i), ii)$ and $iii)$ as in Theorem \ref{mainthm}. This will be the content of Theorem \ref{thm:global}.

\begin{theorem}\label{mainthm}
    Consider the distribution $\left(\mathbb{R}^6, \SD=\ker(\lambda_1)\cap\ker(\lambda_2)\right)$ defined by the following smooth $1$-forms:
    \begin{align*}
        \lambda_1&=dz_1-y_1dx_1-y_2dx_2-\left(\frac{x_2^3}{3}+x_2+2x_1\right)dy_1\\
        \lambda_2&=dz_2-y_2dx_1-y_1dx_2.
    \end{align*}
    The associated distribution germ around any point $q=(x_1,x_2,y_1,y_2,z_1,z_2)$ in the region $\{|x_2|<1\}\subset\R^6$ satisfies the following properties: 
    \begin{itemize}
        \item[i)] It is a corank-$2$ fat distribution germ.
        \item[ii)] It admits two Reeb directions given by $X_1=\partial_{z_1}$ and $X_2=\partial_{z_2}$.
        \item[iii)] Any diffeomorphic germ does not support a complex contact structure.
    \end{itemize}
\end{theorem}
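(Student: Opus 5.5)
The plan is to verify the three properties in order. Properties i) and ii) are direct computations with the explicit forms. Property iii) follows from Theorem \ref{thm:LocalComplexContact} once the Čap--Eastwood tensor $S$ of Theorem \ref{theorem: 1} is shown to be nonzero at every point of $\{|x_2|<1\}$. Since $J$ and $S$ are constructed canonically from $\SD$ (up to the sign of $J$, which is harmless by Remark \ref{signJ}), $S$ is natural under diffeomorphisms. Hence any germ diffeomorphic to ours also has $S\neq 0$, and by Theorem \ref{thm:LocalComplexContact} it cannot be induced by a complex contact structure.

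For i) and ii), write $f=\frac{x_2^3}{3}+x_2+2x_1$ and $c=1+x_2^2$. A short computation gives
\begin{align*}
d\lambda_1&=-dx_1\wedge dy_1+dx_2\wedge dy_2-c\,dx_2\wedge dy_1,\\
d\lambda_2&=dx_1\wedge dy_2+dx_2\wedge dy_1.
\end{align*}
Neither form has a $dz$ component and neither depends on $z$. Together with $\lambda_i(\partial_{z_j})=\delta(i,j)$ and $[\partial_{z_1},\partial_{z_2}]=0$, this gives all conditions of Definition \ref{Def:ReebDirections}, proving ii).

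For fatness, the projection $\SD\to\langle\partial_{x_1},\partial_{x_2},\partial_{y_1},\partial_{y_2}\rangle$ is an isomorphism, so I use $(x_1,x_2,y_1,y_2)$ as coordinates on $\SD$. For $\psi=a\lambda_1+b\lambda_2$, the Pfaffian of $a\,d\lambda_1+b\,d\lambda_2$ computes $q_\SD$ up to a positive factor:
\[q_\SD(a,b)\propto a^2-c\,ab+b^2 .\]
This binary form has discriminant $c^2-4$. It is definite exactly when $c<2$, i.e. on $\{|x_2|<1\}$, which proves i).

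For iii), I would construct $J$ explicitly, following Subsection \ref{subsection:J}.
\begin{itemize}
\item[1.] Take the complex root $\psi=a\lambda_1+\lambda_2$ with $a=\frac{c+i\sqrt{4-c^2}}{2}$, choosing the sign so that the orientation is respected.
\item[2.] Read off $J$ on $\SD$ from the complex $2$-plane determined by the simple complex $2$-form $a\,d\lambda_1+d\lambda_2$. Read off $J$ on $Q\cong\langle\partial_{z_1},\partial_{z_2}\rangle$ from the identification $Q\cong\mathbb{C}$ given by $\psi$.
\item[3.] Extend to $\tilde J$ on $TM$ by letting it act on the Reeb span $\langle\partial_{z_1},\partial_{z_2}\rangle$ via this $Q$-structure.
\item[4.] Compute $\tilde S$ from \eqref{eq: Stilde} on the frame formed by the horizontal lifts of $\partial_{x_i},\partial_{y_i}$ together with $\partial_{z_1},\partial_{z_2}$.
\item[5.] Compute the correction $K$ from \eqref{eq:LKuv}, and finally $S$.
\end{itemize}
All coefficients of $J$ depend only on $x_2$ (through $c$), and the lifted frame has coefficients polynomial in $x$ and $y$, so the computation is finite. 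Using the Reeb fields simplifies it: $[\partial_{z_j},\cdot]$ kills everything, since nothing depends on $z$.

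The main obstacle is extracting a manifestly nonzero component of $S$. The heuristic is that in the flat model (Example \ref{ex: complex-contact}) $c$ is constant and $S=0$, whereas here $J|_{\SD}$ rotates with $x_2$ via $c'(x_2)=2x_2$, and the cubic term in $f$ produces a non-constant $\partial_{x_2}f$. I would first evaluate $S(u,v)$ with $u$ the horizontal lift of $\partial_{x_2}$ and $v$ a horizontal lift of $\partial_{y_1}$ or $\partial_{y_2}$. I expect the derivative terms $\partial_{x_2}(J)$ to survive the complex-antilinear projection implicit in $K$, leaving a component proportional to an expression like $\frac{d}{dx_2}\big(c/\sqrt{4-c^2}\big)$ or to $\partial_{x_2}^2 f=2x_2$. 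If that component vanishes on the hyperplane $x_2=0$, I would then find a second pair $(u,v)$, or use the $2x_1$ and linear terms in $f$ (which break the symmetry between the roles of $x_1$ and $x_2$ in $\lambda_2$), to obtain a component nonzero there as well. This would make $S\neq0$ at every point of the region.
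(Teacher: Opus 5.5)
\textbf{There is a genuine gap in part iii).} Parts i) and ii) are fine and agree with the paper: your form $a^2-c\,ab+b^2$ is half the paper's Gram matrix $\begin{pmatrix}2&-c\\-c&2\end{pmatrix}$. Using naturality of $S$ to pass to diffeomorphic germs is also correct.

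Part iii) is the whole content of the theorem, and you never actually show $S\neq0$. You only say you ``expect'' the $x_2$-derivatives of $J$ to survive the correction by $K$. Your heuristic that the non-vanishing comes from $c'=2x_2$ is right, but it has to be checked, since $K$ is built precisely to cancel part of $\tilde S$.

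Moreover, the first evaluation you propose is guaranteed to give zero. If $u=\Xi_2$ is the horizontal lift of $\partial_{x_2}$ and $v\in\SD$, then $S(u,v)\equiv\SL(u,v)+J\SL(Ju,v)=0$ by property iii) of Theorem \ref{theorem: 1}. The tensor $S$ only sees its first slot modulo $\SD$, so that slot must be transverse to $\SD$.

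The paper takes $R=\partial_{z_2}$ and $\nu=\Xi_2$. Because $[\partial_{z_j},v]\in\SD$ for every horizontal $v$, the computation reduces to $P:=[\tilde J\partial_{z_2},\Xi_2]\equiv(2/\Delta)'\partial_{z_1}-(c/\Delta)'\partial_{z_2}$, where $\Delta=\sqrt{4-c^2}$. The paper then shows that the $\partial_{z_1}$-component of the resulting expression is $A_1=8x_2\bigl(c(c+\Delta)-2\bigr)/\Delta^4$. This is nonzero for $x_2\neq0$, because $c(c+\Delta)\neq2$ for $1\le c<2$ (Lemma \ref{lemma:A1NotVanishing}).

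\textbf{Your pointwise goal is false, and the proposed fallback cannot work.} In the frame $\{\Xi_1,\dots,\Xi_4,\partial_{z_1},\partial_{z_2}\}$ every bracket has constant coefficients except $[\Xi_2,\Xi_3]=c\,\partial_{z_1}-\partial_{z_2}$. The term $2x_1$ in $f$ only produces the constant bracket $[\Xi_1,\Xi_3]=\partial_{z_1}$. The matrix of $\tilde J$ in this frame depends on $c$ alone. Consequently $K|_{\SD}=0$, while $\tilde S(\partial_{z_j},\cdot)$, $K\partial_{z_j}$ and hence $S(\partial_{z_j},\cdot)$ are each $c'=2x_2$ times a function of $c$.

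So $S\equiv0$ on the hyperplane $\{x_2=0\}$, consistent with $A_1$ vanishing there. No second pair $(u,v)$ and no use of the $2x_1$ term will give a nonzero value at those points.

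What you need, and what the paper proves, is only that $S$ is not identically zero on any open set. Since $\{x_2\neq0\}$ is dense, the germ at every point, including points with $x_2=0$, fails the criterion of Theorem \ref{thm:LocalComplexContact}, which requires $S\equiv0$ on a whole neighbourhood.
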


Henceforth, we fix the distribution $(\R^6,\SD=\ker(\lambda_1)\cap\ker(\lambda_2))$ for the rest of the article. Whenever we write $\SD$ we will refer to the specific distribution from Theorem \ref{mainthm} unless otherwise stated. The proof of Theorem \ref{mainthm} is delayed until the end of the section. Let us start proving that it satisfies the fatness condition in an open region of $\R^6$.

\begin{lemma}\label{lemma:fatness}
   The distribution $\left(\mathbb{R}^6, \SD=\ker(\lambda_1)\cap\ker(\lambda_2)\right)$ 
    is fat in the region $\{|x_2|<1\}$.
\end{lemma}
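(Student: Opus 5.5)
The plan is to compute the quadratic form $q_\SD$ from Eq. (\ref{qd}) explicitly and check that it is definite exactly when $|x_2|<1$. Since $\lambda_i=dz_i+(\text{terms in }dx_1,dx_2,dy_1)$, the projection $(x_1,x_2,y_1,y_2,z_1,z_2)\mapsto(x_1,x_2,y_1,y_2)$ restricts to an isomorphism on each fibre of $\SD$. Hence $\kappa:=dx_1\wedge dx_2\wedge dy_1\wedge dy_2|_\SD$ is a volume form on $\SD$, and any $2$-form in $dx_i,dy_j$ restricts to $\SD$ faithfully. By Remark \ref{Remark:duality}, the dual curvature sends $a\lambda_1+b\lambda_2$ (with $a,b$ constants on the fibre $Q^*_x\cong \SD^\perp_x$) to $(a\,d\lambda_1+b\,d\lambda_2)|_\SD$. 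So it suffices to compute $\omega_{a,b}\wedge\omega_{a,b}$ with $\omega_{a,b}:=a\,d\lambda_1+b\,d\lambda_2$.

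Next I compute the differentials. Writing $c:=1+x_2^2$, one gets
\begin{align*}
d\lambda_1&=-dx_1\wedge dy_1+dx_2\wedge dy_2-c\,dx_2\wedge dy_1,\\
d\lambda_2&=dx_1\wedge dy_2+dx_2\wedge dy_1 .
\end{align*}
Here the $2x_1\,dy_1$ term contributes $-2\,dx_1\wedge dy_1$, which combines with $+dx_1\wedge dy_1$ coming from $-y_1dx_1$. In $\omega_{a,b}\wedge\omega_{a,b}$ only pairs of monomials with disjoint indices survive. These are $dx_1\wedge dy_1$ with $dx_2\wedge dy_2$, and $dx_1\wedge dy_2$ with $dx_2\wedge dy_1$. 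Tracking signs when reordering to $\kappa$, I expect
\begin{equation*}
\omega_{a,b}\wedge\omega_{a,b}=2\left(a^2-c\,ab+b^2\right)\kappa ,
\end{equation*}
so that $q_\SD(a,b)=2(a^2-(1+x_2^2)ab+b^2)$ up to the positive factor.

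Finally, the binary quadratic form $a^2-cab+b^2$ is definite if and only if its discriminant $c^2-4$ is negative. Since $c>0$, this means $c<2$, that is, $1+x_2^2<2$, which is equivalent to $|x_2|<1$. This gives fatness on $\{|x_2|<1\}$, and also shows that the form degenerates at $|x_2|=1$ and becomes hyperbolic beyond. The only step where I expect care to be needed is the sign bookkeeping in the wedge products. An error there could turn $-cab$ into $+cab$, which is harmless, but it could also flip the relative sign of $a^2$ and $b^2$, which would wrongly make the form indefinite. I will therefore double-check the two permutations $dx_1\wedge dy_1\wedge dx_2\wedge dy_2=-\kappa$ and $dx_1\wedge dy_2\wedge dx_2\wedge dy_1=\kappa$ explicitly.
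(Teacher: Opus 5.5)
Your proposal is correct and follows the paper's proof essentially verbatim. You compute $d\lambda_1$ and $d\lambda_2$, and your expressions match the paper's, since $A_{x_1}=-2$ and $A_{x_2}=-(1+x_2^2)$. You then obtain the quadratic form $2\bigl(a^2-(1+x_2^2)ab+b^2\bigr)$ relative to $dx_1\wedge dx_2\wedge dy_1\wedge dy_2$, and conclude definiteness exactly when $4-(1+x_2^2)^2>0$, i.e.\ $|x_2|<1$. Your sign checks $dx_1\wedge dy_1\wedge dx_2\wedge dy_2=-\kappa$ and $dx_1\wedge dy_2\wedge dx_2\wedge dy_1=\kappa$ are right. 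Your observation that projecting to the $(x_1,x_2,y_1,y_2)$-coordinates identifies each fibre of $\SD$ justifies a point the paper uses only implicitly.
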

\begin{proof}
First, note that if we write $A:= \dfrac{-x_2^3}{3}-x_2-2x_1$ and we denote $A_{x_i}:=\frac{\partial}{\partial x_i}(A)$, $i=1,2$, then we have: 
\begin{align}
    d\lambda_1& = -dy_1\wedge dx_1 - dy_2\wedge dx_2 + dA\wedge dy_1 =    dx_1\wedge dy_1 + dx_2\wedge dy_2 + A_{x_1}dx_1\wedge dy_1 + A_{x_2}dx_2\wedge dy_1 = \label{dlambda1} 
    \\ & = (A_{x_1}+1) dx_1\wedge dy_1 + A_{x_2} dx_2\wedge dy_1 + dx_2\wedge dy_2,\notag \\
    d\lambda_2&=dx_1\wedge dy_2+dx_2\wedge dy_1. \label{dlambda2} 
\end{align}

Therefore, this yields the following:
\begin{align*}
&(d\lambda_1)^2= -2(A_{x_1}+1) dx_1\wedge dx_2\wedge dy_1\wedge dy_2, \\ 
 &   (d\lambda_2)^2 = 2 dx_1\wedge dx_2 \wedge dy_1\wedge dy_2,\\
 & d\lambda_1\wedge d\lambda_2 = A_{x_2}dx_1\wedge dx_2 \wedge dy_1\wedge dy_2. \label{dlambda2}
\end{align*}

And so, if we fix the volume form  $\tau|_\SD:=dx_1\wedge dx_2\wedge dy_1\wedge dy_2$ in $\SD$, the quadratic form from Eq. (\ref{qd}) has the following associated matrix:

\begin{equation}
    M_{q_\SD}=\begin{pmatrix}
(d\lambda_1)^2 & (d\lambda_1\wedge d\lambda_2) \\
(d\lambda_1\wedge d\lambda_2) & (d\lambda_2)^2
\end{pmatrix}=\begin{pmatrix}
-2(A_{x_1}+1) & A_{x_2} \\
A_{x_2} & 2
\end{pmatrix} = \begin{pmatrix}
2 & -x_2^2-1 \\
-x_2^2-1 & 2
\end{pmatrix}.
\end{equation}

Finally, $\det(M_{q_\SD})=4-(x_2^2+1)^2$ and this expression is positive whenever $|x_2^2+1|<2$ or equivalently, whenever $|x_2|<1$. Thus, $\SD$ is fat or elliptic in the region $\{|x_2| <1\}\subset\R^6$.\end{proof}

\begin{remark}\label{x2smallerthan1}Henceforth, unless otherwise stated, we will always assume that we are working in the region $\{|x_2| <1\}\subset\R^6$ where our distribution satisfies the fatness condition. This will be implicit in all the subsequent calculations.
\end{remark}

The following lemma provides a global framing for the distribution $(\R^6, \SD)$.

\begin{lemma}\label{lemma:framing} A global framing of the distribution $\left(\mathbb{R}^6, \SD=\ker(\lambda_1)\cap\ker(\lambda_2)\right)$ defined by:
    \begin{align*}
        \lambda_1&=dz_1-y_1dx_1-y_2dx_2-\left(\frac{x_2^3}{3}+x_2+2x_1\right)dy_1\\
        \lambda_2&=dz_2-y_2dx_1-y_1dx_2
    \end{align*}
    
is given by $\{\Xi_1,\Xi_2,\Xi_3,\Xi_4\}$, where:

\begin{align*}
    \Xi_1&=\partial_{x_1}+y_1\partial_{z_1}+y_2\partial_{z_2}, &
    \Xi_2&= \partial_{x_2}+y_2\partial_{z_1}+y_1\partial_{z_2},\\
    \Xi_3&=\partial_{y_1}+\left(\frac{x_2^3}{3}+x_2+2x_1\right)\partial_{z_1}, &
    \Xi_4&=\partial_{y_2}.
\end{align*}
\end{lemma}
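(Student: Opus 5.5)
The plan is to verify directly that the four vector fields lie in $\SD$ and are pointwise linearly independent. Since $\SD$ has rank $4$, these two facts together show that $\{\Xi_1,\Xi_2,\Xi_3,\Xi_4\}$ is a global framing.

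\emph{Step 1 (horizontality).} I will evaluate $\lambda_1$ and $\lambda_2$ on each $\Xi_j$, writing $B:=\frac{x_2^3}{3}+x_2+2x_1$, so that $\lambda_1=dz_1-y_1dx_1-y_2dx_2-B\,dy_1$.
\begin{itemize}
\item For $\Xi_1$: $\lambda_1(\Xi_1)=y_1-y_1=0$ and $\lambda_2(\Xi_1)=y_2-y_2=0$.
\item For $\Xi_2$: $\lambda_1(\Xi_2)=y_2-y_2=0$, since the $\partial_{x_2}$ component contributes $-y_2$. Likewise $\lambda_2(\Xi_2)=y_1-y_1=0$.
\item For $\Xi_3$: $\lambda_1(\Xi_3)=B-B=0$, and $\lambda_2(\Xi_3)=0$ because $\lambda_2$ has no $dy_1$ or $dz_1$ term.
\item For $\Xi_4=\partial_{y_2}$: neither form has a $dy_2$ term, so both vanish.
\end{itemize}
Hence every $\Xi_j$ is a section of $\ker(\lambda_1)\cap\ker(\lambda_2)=\SD$ on all of $\R^6$.

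\emph{Step 2 (independence).} I will project onto the coordinates $(x_1,x_2,y_1,y_2)$, that is, apply $dx_1,dx_2,dy_1,dy_2$. Under this projection $\Xi_1,\Xi_2,\Xi_3,\Xi_4$ map to the standard basis vectors $\partial_{x_1},\partial_{x_2},\partial_{y_1},\partial_{y_2}$. The resulting $4\times4$ matrix is the identity, so the fields are linearly independent at every point. I would also note that $\lambda_1,\lambda_2$ are everywhere independent, because their $dz_1,dz_2$ parts are. Therefore $\SD$ indeed has rank $4$ everywhere, and the four fields span it.

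There is no real obstacle here. The only point requiring care is bookkeeping in Step 1, making sure the $B\,dy_1$ term is cancelled by the $B\partial_{z_1}$ component of $\Xi_3$. Nothing depends on the region $\{|x_2|<1\}$, so the framing is global on $\R^6$.
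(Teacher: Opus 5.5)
Your proposal is correct and follows essentially the same approach as the paper. The paper also checks that $\lambda_i(\Xi_j)=0$ for all $i,j$ and that the four fields are linearly independent. You spell out the computations explicitly, and you add the remark that $\lambda_1,\lambda_2$ are pointwise independent, so $\SD$ has rank $4$ everywhere.
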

\begin{proof}
    It follows from the fact that $\lambda_i(\Xi_j)=0$ for all $i\in\{1,2\}$ and $j\in\{1,2,3,4\}$ together with the fact that $\{\Xi_1,\Xi_2,\Xi_3,\Xi_4\}$ is a linearly independent set of vectors.
\end{proof}

The next result states that $(\R^6,\SD)$ admits two Reeb directions and provides  their analytic description.

\begin{lemma}\label{lemma:reeb}
   The distribution $\left(\mathbb{R}^6, \SD=\ker(\lambda_1)\cap\ker(\lambda_2)\right)$ admits two Reeb directions $v_1,v_2$ given by $v_1=\partial_{z_1}$ and $v_2=\partial_{z_2}$.
   \end{lemma}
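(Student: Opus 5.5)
We verify the four conditions of Definition \ref{Def:ReebDirections} directly, with $Z_1=v_1=\partial_{z_1}$, $Z_2=v_2=\partial_{z_2}$ and the defining forms $\lambda_1,\lambda_2$ from Theorem \ref{mainthm}. All the work is a short computation in coordinates. The framing $\{\Xi_1,\dots,\Xi_4\}$ of Lemma \ref{lemma:framing} and the expressions for $d\lambda_1,d\lambda_2$ in Eqs. (\ref{dlambda1})--(\ref{dlambda2}) give everything we need.

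\emph{Conditions ii) and i).} Since $\lambda_1=dz_1+(\text{terms in }dx_1,dx_2,dy_1)$ and $\lambda_2=dz_2+(\text{terms in }dx_1,dx_2)$, we get $\lambda_i(\partial_{z_j})=\delta(i,j)$ immediately, which is ii). For i), take $v=a\partial_{z_1}+b\partial_{z_2}\in\SD$. Applying $\lambda_1$ and $\lambda_2$ gives $a=b=0$, so $\langle\partial_{z_1},\partial_{z_2}\rangle\cap\SD=0$. A dimension count ($4+2=6$) then yields $T\R^6=\SD\oplus\langle\partial_{z_1},\partial_{z_2}\rangle$. Equivalently, $\{\Xi_1,\Xi_2,\Xi_3,\Xi_4,\partial_{z_1},\partial_{z_2}\}$ is a frame: the matrix expressing it in the coordinate frame is triangular with ones on the diagonal.

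\emph{Conditions iii) and iv).} By Eqs. (\ref{dlambda1}) and (\ref{dlambda2}), both $d\lambda_1$ and $d\lambda_2$ involve only $dx_1,dx_2,dy_1,dy_2$. The coefficient $A_{x_1}+1=-1$ is constant, and $A_{x_2}=-x_2^2-1$ depends only on $x_2$, though the coefficients are irrelevant here. Hence $\iota_{\partial_{z_i}}d\lambda_j=0$ identically on $T\R^6$, and in particular on $\SD$. Finally, $[\partial_{z_1},\partial_{z_2}]=0$ because these are coordinate vector fields.

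There is no real obstacle. The only point to state explicitly is that these identities hold globally on $\R^6$, not only on the fat region $\{|x_2|<1\}$, so the Reeb directions exist wherever the germ is considered. The one step needing care is the transversality in i), which the frame argument above settles.
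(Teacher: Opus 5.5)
Your proposal is correct and follows the same route as the paper. Like the paper, you verify the four conditions of Definition \ref{Def:ReebDirections} directly: transversality comes from the framing of Lemma \ref{lemma:framing}, conditions ii) and iii) are read off from the coordinate expressions of $\lambda_1,\lambda_2$ and their differentials, and iv) holds because coordinate fields commute. You also make explicit two points the paper leaves implicit: the triangular frame matrix, and the fact that $d\lambda_1$ and $d\lambda_2$ contain no $dz_j$ terms.
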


\begin{proof}
It suffices to check that the four conditions from Definition \ref{Def:ReebDirections} are satisfied. Note that condition $i)$ readily follows by observing that the vectors in the global framing of $(\R^6,\SD)$ given by Lemma \ref{lemma:framing} together with the coordinate vectors $\partial_{z_1}$ and $\partial_{z_2}$ span the whole tangent bundle $T\R^6$. Conditions $ii)$ and $iii)$ follow from the analytic expression of $\lambda_1$ and $\lambda_2$. Finally, condition $iv)$ is immediate since $\partial_{z_1}$ and $\partial_{z_2}$ are both coordinate directions in $T\R^6$ and they thus commute. This yields the claim.
\end{proof}
Following the procedure described along Subsection \ref{subsection:J}, we want to find the two roots $\psi_1, \psi_2\in Q_x^*\otimes\mathbb{C}$ for which
$q_\SD(\psi_i,\psi_i)=(\psi_i\circ\SL_x)\wedge(\psi_i\circ\SL_x)$ is the zero element in $\bigwedge^4(\SD^*)\otimes\mathbb{C}$. Since they are conjugate to each other, it suffices to find one of them. This is the content of the following lemma.

\begin{lemma}\label{election:t}
Let $t:=\dfrac{x_2^2+1}{2}+ i\left(\frac{\sqrt{3-2x_2^2-x_2^4}}{2}\right)\in\mathbb{C}$. The $1$-form $\psi:= \lambda_1+t\cdot\lambda_2$ (as well as its complex conjugate) satisfies that $q_\SD(\psi,\psi)=(\psi\circ\SL_x)\wedge(\psi\circ\SL_x)$ is the zero element in $\bigwedge^4(\SD^*)\otimes\mathbb{C}$; i.e. $\psi$ is a complex root of the quadratic form $q_\SD$.
\end{lemma}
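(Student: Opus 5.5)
By the dual-curvature identity in Remark \ref{Remark:duality}, for any $1$-form $\psi=\lambda_1+t\lambda_2$ with $t$ a (complex, pointwise constant) scalar we have $\psi\circ\SL=-d\psi|_\SD=-(d\lambda_1+t\,d\lambda_2)|_\SD$. The factor $dt\wedge\lambda_2$ vanishes on $\SD$, so $t$ may depend on $x_2$ without affecting anything. The plan is therefore to compute $q_\SD(\psi,\psi)=(d\lambda_1+t\,d\lambda_2)^2|_\SD$ by bilinearity, using the matrix $M_{q_\SD}$ already obtained in the proof of Lemma \ref{lemma:fatness}. With respect to the volume form $\tau|_\SD=dx_1\wedge dx_2\wedge dy_1\wedge dy_2$, this gives
\begin{equation*}
q_\SD(\psi,\psi)=\left((d\lambda_1)^2+2t\,d\lambda_1\wedge d\lambda_2+t^2(d\lambda_2)^2\right)\big|_\SD=\left(2-2(x_2^2+1)t+2t^2\right)\tau|_\SD .
\end{equation*}
Here I use $-2(A_{x_1}+1)=2$ and $A_{x_2}=-x_2^2-1$, which were obtained earlier.

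One point needs care: restricting to $\SD$ must be harmless. The forms $dx_1,dx_2,dy_1,dy_2$ restrict to the dual coframe of $\{\Xi_1,\dots,\Xi_4\}$ from Lemma \ref{lemma:framing}, since each $\Xi_j$ has coordinate parts $\partial_{x_1},\partial_{x_2},\partial_{y_1},\partial_{y_2}$ plus $\partial_{z}$-terms. Moreover, the expressions (\ref{dlambda1}) and (\ref{dlambda2}) involve only these four differentials. Hence the wedge computations hold verbatim on $\SD$, and $\tau|_\SD$ is a genuine volume form there.

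It then remains to solve the quadratic $t^2-(x_2^2+1)t+1=0$. Its discriminant is $(x_2^2+1)^2-4=x_2^4+2x_2^2-3$, which is negative exactly when $|x_2|<1$, in agreement with Lemma \ref{lemma:fatness}. The roots are
\[
t=\frac{x_2^2+1}{2}\pm \frac{i}{2}\sqrt{3-2x_2^2-x_2^4}.
\]
The $+$ root is the stated $t$, and the $-$ root is its complex conjugate. One may also verify this directly: with $t=a+ib$, $a=(x_2^2+1)/2$ and $b^2=1-a^2$, the expression $t^2-2at+1$ becomes $a^2-b^2+2iab-2a^2-2iab+1=1-a^2-b^2=0$. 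So $q_\SD(\psi,\psi)=0$, and likewise for $\bar\psi$, because $q_\SD$ has real coefficients.

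No step here is a real obstacle. The only places needing attention are the sign conventions, since $q_\SD$ is defined through $\psi\circ\SL$ rather than $d\psi|_\SD$, and that sign squares away. The other is the justification that restricting to $\SD$ commutes with the wedge computations, which the framing argument above settles.
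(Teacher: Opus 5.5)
Your proposal is correct and follows essentially the same route as the paper. Both expand $q_\SD(\psi,\psi)$ through the Gram matrix $M_{q_\SD}$ from Lemma \ref{lemma:fatness}, which gives $2-2(x_2^2+1)t+2t^2$, and then solve $t^2-(x_2^2+1)t+1=0$, using $|x_2|<1$ to obtain the two complex conjugate roots. Your extra remarks are correct but only make explicit what the paper leaves implicit: the sign in $\psi\circ\SL=-d\psi|_\SD$, the vanishing of $dt\wedge\lambda_2$ on $\SD$, the restriction to $\SD$ via the framing, and the direct check $1-a^2-b^2=0$.
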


\begin{proof}
Write $\psi=\lambda_1+s\cdot\lambda_2$ as $(1,s)$ in the basis $\{\lambda_1,\lambda_2\}$ of $Q^*$. Then, we can just check that 

\begin{equation*}
    \begin{pmatrix}1 & s\end{pmatrix}\begin{pmatrix}
2 & -x_2^2-1 \\
-x_2^2-1 & 2
\end{pmatrix}\begin{pmatrix}
    1\\
    s
\end{pmatrix}= 0\Longleftrightarrow 2s^2-2s(x_2^2+1)+2= 0\Longleftrightarrow s^2-s(x_2^2+1)+1=0.
\end{equation*}

And, so, the solutions to this equation are:

\[
s=\dfrac{x_2^2+1}{2}\pm i\left(\frac{\sqrt{4-(x_2^2+1)^2}}{2}\right)=\dfrac{x_2^2+1}{2}\pm i\left(\frac{\sqrt{3-2x_2^2-x_2^4}}{2}\right).
\] Note that we used the fact that $|x_2|<1$ (Remark \ref{x2smallerthan1}); otherwise the expression $4-(x_2^2+1)^2$ within the square root might be negative. This concludes the proof of the lemma.\end{proof}

We will henceforth reserve the letter $t$ in order to denote the complex number $$t=\dfrac{x_2^2+1}{2}+ i\left(\frac{\sqrt{3-2x_2^2-x_2^4}}{2}\right).$$

\begin{remark}
Strictly speaking, $t$ is not a fixed complex number but a smooth function $t:(-1,1)_{x_2}\to\mathbb{C}$ that depends on $x_2\in(-1,1)$. We will henceforth treat it as such although we will abuse terminology and refer to it as a complex number and write simply $t\in\mathbb{C}$ in order not to overload the notation. 
\end{remark}

\subsection{Almost complex structure $J:Q\to Q$ on $Q=TM/\SD$}

Recall that $\psi\in Q^*_x\otimes\mathbb{C}$ allows to identify $Q_x$ with the complex numbers $\mathbb{C}$. Therefore, this identification endows $Q_x$ with a complex structure $J:Q_x\to Q_x$. We will now compute such $J$.

\begin{remark}\label{ConjugateSign}
    As noted before, we could choose $\bar{\psi}$ (the complex conjugate form of $\psi$) instead of $\psi$ from scratch. They determine opposite almost complex structures $J, -{J}$. Nonetheless, since the ultimate goal of this article is to show that the tensor $S$ from Eq. (\ref{eq: S}) does not vanish, in view of Remark \ref{signJ}, we could choose either $\psi$ or $\bar{\psi}$ with identical outcome. We will thus henceforth work with $\psi$ from Lemma \ref{election:t} without loss of generality.
\end{remark}

Since the Reeb directions $v_1=\partial_{z_1}$, $v_2=\partial_{z_2}$ provide the splitting $TM=\SD \oplus \langle v_1,v_2\rangle$, we can actually express any class $[w]\in Q=TM/\SD$ by a representative of the form $w=av_1+bv_2\in \langle v_1,v_2\rangle$, where $a,b\in\R$. Since we also have a well defined $J:\SD_x\to\SD_x$ over each point $x\in M$, this allows to define an extension $\tilde{J}:T_xM\to T_xM$ in a straightforward fashion; i.e. by considering the splitting $TM=\SD \oplus \langle v_1,v_2\rangle$ and extending by linearity. Let us thus compute $J(av_1+bv_2)$ explicitly by using the $1$-form $\psi\in Q_x^*\otimes\mathbb{C}$. First note that
\begin{align*}
    \psi(av_1+bv_2)&= a\psi(v_1)+b\psi(v_2)=a(\lambda_1(v_1)+t\underbrace{\lambda_2(v_1)}_{0})+b(\underbrace{\lambda_1(v_2)}_0+t\lambda_2(v_2))=\\
    &=a+bt=(a+\operatorname{Re}(t)\cdot b)+i\cdot(\operatorname{Im}(t)\cdot b).
\end{align*}

Note that we have used condition $ii)$ from Definition \ref{Def:ReebDirections} in the third equality. We thus have that the matrices associated to $\psi:Q\to\mathbb{C}$ and its inverse $\psi^{-1}:\mathbb{C}\to Q$, when expressed with respect to the bases $\{v_1,v_2\}$ and $\{1,i\}$, are:

\[M_\psi=
\begin{pmatrix}
1 & \operatorname{Re}(t) \\
0 & \operatorname{Im}(t)
\end{pmatrix},\quad 
M_{\psi^{-1}}=
\begin{pmatrix}
1 & -\frac{\operatorname{Re}(t)}{\operatorname{Im}(t)} \\
0 & \frac{1}{\operatorname{Im}(t)}
\end{pmatrix}.
\]

We thus have that $i\cdot\psi(av_1+bv_2)=-b\cdot \operatorname{Im}(t)+i\cdot(a+\operatorname{Re}(t)\cdot b)$. In other words, if we regard $i\cdot\psi:Q\to\mathbb{C}$ as a linear map, then its associated matrix with respect to the bases $\{v_1,v_2\}$ and $\{1,i\}$ is
\[M_{i \cdot\psi}=
\begin{pmatrix}
0 & -\operatorname{Im}(t) \\
1 & \operatorname{Re}(t)
\end{pmatrix}.
\]

We can finally describe the almost complex structure as $J(av_1+bv_2)=\psi^{-1}\circ (i\cdot\psi)(av_1+bv_2)$. Its associated matrix with respect to the basis $\{v_1,v_2\}$ is thus
\[M_{J}=
\begin{pmatrix}
1 & -\frac{\operatorname{Re}(t)}{\operatorname{Im}(t)} \\
0 & \frac{1}{\operatorname{Im}(t)}
\end{pmatrix}\cdot\begin{pmatrix}
0 & -\operatorname{Im}(t) \\
1 & \operatorname{Re}(t)
\end{pmatrix}=\begin{pmatrix}
\frac{-\operatorname{Re}(t)}{\operatorname{Im}(t)} & -\operatorname{Im}(t)-\frac{\operatorname{Re}(t)^2}{\operatorname{Im}(t)} \\
\frac{1}{\operatorname{Im}(t)} & \frac{\operatorname{Re}(t)}{\operatorname{Im}(t)}
\end{pmatrix}.\]

For our particular choice $t=\frac{x_2^2+1}{2}+i\cdot\frac{\sqrt{3-2x_2^2-x_2^4}}{2}\in\mathbb{C}$, we have that

\[
\operatorname{Re}(t)=\frac{x_2^2+1}{2},\quad \operatorname{Im}(t)=\frac{\sqrt{3-2x_2^2-x_2^4}}{2},
\]
\[
\operatorname{Re}(t)^2=\frac{(x_2^2+1)^2}{4},\quad  \frac{\operatorname{Re}(t)}{\operatorname{Im}(t)}=\frac{x_2^2+1}{\sqrt{3-2x_2^2-x_2^4}},\quad \frac{\operatorname{Re}(t)^2  }{\operatorname{Im}(t)}=\frac{(x_2^2+1)^2}{2\sqrt{3-2x_2^2-x_2^4}}
\]

and thus we get the following associated matrix:

\[M_{J}=\frac{1}{\sqrt{3-2x_2^2-x_2^4}}
\begin{pmatrix}
-(x_2^2+1) & -2 \\
2 & x_2^2+1
\end{pmatrix}.\]
This way, we can compute $J(v_1)=J\partial_{z_1}$ and $J(v_2)=J\partial_{z_2}$:
\begin{align}\label{Jv1}
    J\partial_{z_1}&=M_J\cdot\begin{pmatrix}
1 \\
0
\end{pmatrix}=\frac{-(x_2^2+1)}{\sqrt{3-2x_2^2-x_2^4}}\cdot\partial_{z_1}+\frac{2}{\sqrt{3-2x_2^2-x_2^4}}\partial_{z_2}.\\
    J\partial_{z_2}&=M_J\cdot\begin{pmatrix}
0 \\
1
\end{pmatrix}=\frac{-2}{\sqrt{3-2x_2^2-x_2^4}}\cdot\partial_{z_1}+\frac{x_2^2+1}{\sqrt{3-2x_2^2-x_2^4}}\partial_{z_2}.\label{Jv2}
\end{align}

Since $\{v_1,v_2\}$ is a basis of $Q$, this allows to compute $J(av_1+bv_2)$ for any other combination.

\subsection{Almost complex structure on $\SD$}\label{JD}

Let us briefly discuss how a decomposition $\SD\otimes\mathbb{C}=W\oplus\bar{W}$ of $\SD$ into two complex planes such that $W\cap\bar{W}=\{0\}$ canonically defines a complex structure $J:\SD\to\SD$ and viceversa. Let us start with the latter part.

If $J:\SD\to\SD$ is a complex structure, then $\SD\otimes\mathbb{C}=W\oplus\bar{W}$, where
\begin{align*}W=\left(\SD\otimes\mathbb{C}\right)^-,\quad \bar{W}=\left(\SD\otimes\mathbb{C}\right)^+.
\end{align*}

Here $W$ and $\bar{W}$ denote the eigenspaces of $J$ associated to the eigenvalues $-i$ and $+i$, respectively. Conversely, the aforementioned decomposition $\SD\otimes\mathbb{C}=W\oplus\bar{W}$ canonically defines a complex structure $J:\SD\to\SD$ as follows. Declare $J|_W=-i\cdot Id$, $J|_{\bar{W}}=i\cdot\Id$ and extend by linearity. Then we get a complex linear endomorphism on $\SD\otimes\mathbb{C}$ which, by restriction to $\SD\subset\SD\otimes\mathbb{C}$, yields a complex structure $J:\SD\to\SD$. We are considering the natural inclusion $\SD\subset \SD\otimes\mathbb{C}$ here, which is given by $w\mapsto w\otimes 1$.

In the particular case of our work, a decomposition of $\SD\otimes\mathbb{C}=W\oplus\bar{W}$ is naturally induced by the complex plane $W=\ker(d\psi|_\SD)$. This way, we can  recover the almost complex structure considered by A. Čap and M. Eastwood in \cite{CE2003} by identifying $W$ with its $(-i)$-eigenspace $(\SD\otimes\mathbb{C})^-$.

\begin{remark}
    The fact that the identification of the subspace $\ker(d\psi|_\SD)$ is made with the $(-i)$-eigenspace $(\SD\otimes\mathbb{C})^-$ of $J|_\SD$, and not with the $(+i)$-eigenspace $(\SD\otimes\mathbb{C})^+$ instead, is relevant. The latter would not yield the almost complex structure considered in \cite{CE2003}. Indeed, note that Eq. (\ref{ec:EcuacionClave}) implies that $\SL(Ju,v)=J\SL(u,v)$ for $u,v\in\SD$. By applying $\psi:Q\to \mathbb{C}$ to both sides of the expression, we get
    \[\psi(\SL(Ju,v))=\psi(J\SL(u,v))=i\cdot\psi(\SL(u,v))\] or, equivalently, $d\psi|_\SD(Ju,v)=i\cdot d\psi|_\SD(u,v)$ (Remark \ref{Remark:duality}). Therefore, in view of this equality, $\ker(d\psi|_\SD)$ contains the $(-i)$-eigenspace  $(\SD\otimes\mathbb{C})^-$ of the almost complex structure $J:\SD\to\SD$ from \cite{CE2003}. Now just note that a simple non-zero $2$-form $\psi\circ\SL\in\bigwedge^2\SD^*\otimes\mathbb{C}$ has a complex $2$-dimensional kernel for dimensional reasons. Since $(\SD\otimes\mathbb{C})^-$ also has complex dimension $2$, we conclude that this last subspace inclusion was in fact an equality; i.e. $\ker(d\psi|_\SD)=(\SD\otimes\mathbb{C})^-$.
\end{remark}

Let us find first which complex plane $\Pi\subset\SD^*\otimes\mathbb{C}$ does the $1$-form $\psi\in Q^*\otimes\mathbb{C}$ determine, where recall that $$\psi=\lambda_1+t\cdot\lambda_2=dz_1+tdz_2-(y_1+ty_2)dx_1-(y_2+ty_1)dx_2-\left(\frac{x_2^3}{3}+x_2+2x_1\right)dy_1.$$
Note that
\begin{align} d\psi|_\SD&=d(\lambda_1+t\lambda_2)|_\SD=(d\lambda_1+\underbrace{dt\wedge\lambda_2}_{=0\text{ in }\SD}+td\lambda_2)|_\SD=\\&=-dy_1\wedge dx_1-dy_2\wedge dx_2-(x_2^2+1)dx_2\wedge dy_1-2dx_1\wedge dy_1 - tdy_2\wedge dx_1 - tdy_1\wedge dx_2=\\
    &=-dx_1\wedge dy_1 + dx_2\wedge dy_2+(t-x_2^2-1)dx_2\wedge dy_1 +tdx_1\wedge dy_2. \label{expression}
\end{align}

Recall that, according to the explanation from Subsection \ref{subsection:J}, the $2$-form $d\psi|_\SD$ must be simple by Plücker's criterion. The next lemma shows its explicit decomposition as the wedge product of two $1$-forms.

\begin{lemma}\label{lemma:dpsi}
The $2$-form $d\psi|_\SD\in\bigwedge^2\SD^*\otimes\mathbb{C}$ admits the  complex decomposition $d\psi|_\SD= \alpha\wedge \beta,$ where
\begin{align*}
\alpha&=dx_1-(t-x_2^2-1)dx_2,\\
\beta&=-dy_1+tdy_2. 
\end{align*} In particular, this implies that $d\psi|_\SD\in\bigwedge^2\SD^*\otimes\mathbb{C}$ is a simple $2$-form.
\begin{proof}
    Expanding the expression $\alpha\wedge\beta$, we get that 
    \begin{align*}
&\left(dx_1-(t-x_2^2-1)dx_2\right)\wedge
\left(-dy_1+tdy_2\right)=\\
&-dx_1\wedge dy_1+tdx_1\wedge dy_2+(t-x_2^2-1)dx_2\wedge dy_1 -t(t-x_2^2-1)dx_2\wedge dy_2.
    \end{align*}
Comparing this expression with the one in Eq. (\ref{expression}), it suffices to show that $-t(t-x_2^2-1)= 1$ in order to conclude.
Note that this is equivalent to showing that $t^2-t(x_2^2+1)+1=0$. Notice that since $|x_2|<1$, as we are assuming all over the article (Remark \ref{x2smallerthan1}), the equation $s^2-s(x_2^2+1)+1=0$ holds true if and only if 
$$
s=\frac{(x_2^2+1)\pm\sqrt{(x_2^2+1)^2-4}}{2}=\dfrac{x_2^2+1}{2}\pm i \frac{\sqrt{3-2x_2^2-x_2^4}}{2}.
$$
Therefore, from our choice of $t\in\mathbb{C}$ (recall Lemma \ref{election:t}), the claim readily follows.
\end{proof}
\end{lemma}

The $2$-form $d\psi|_\SD=\alpha\wedge\beta\in\bigwedge^2\SD^*\otimes\mathbb{C}$ thus defines the complex plane $W:=\ker(\alpha)\cap\ker(\beta)\subset\SD\otimes\mathbb{C}$. In particular, it yields the following decomposition
\begin{equation}
    \SD\otimes\mathbb{C}= W\oplus\bar{W},
\end{equation}
where the vectors 
\begin{align}\label{u1}
    u_1&=(\partial_{x_2}+y_2\partial_{z_1}+y_1\partial_{z_2}) + (t-x_2^2-1)\cdot(\partial_{x_1}+y_1\partial_{z_1}+y_2\partial_{z_2}),\\
    u_2&= \partial_{y_2}+t\cdot(\partial_{y_1}+(x_2^3/3+x_2+2x_1)\partial_{z_1})\label{u2}
\end{align}
span the complex plane $W=\langle u_1,u_2\rangle$ and their conjugate vectors $\bar{u}_1,\bar{u}_2$ span the corresponding complex plane $\bar{W}=\langle \bar{u}_1,\bar{u}_2\rangle$. Indeed, note that both vectors $u_1,u_2$ are elements in $\SD\otimes\mathbb{C}$; i.e. $u_1,u_2\in\SD\otimes\mathbb{C}$ and they both lie in $\ker(\alpha)\cap\ker(\beta)$.

We can now use the decomposition above in order to compute $\tilde{J}$. In particular, we will compute $\tilde{J}\nu$ for a particular choice of $\nu\in\SD$ in the next Lemma.

\begin{remark}\label{Remark:JandJtilde}
    The reader should remember that $\tilde{J}:TM\to TM$ is an extension both of $J:Q\to Q$ and of $J:\SD\to\SD$. Therefore, we may write either $J$ or $\tilde{J}$ acting on vectors in $Q$ interchangeably since both endomorphisms agree on $Q$.
\end{remark}

\begin{lemma}\label{lemma: Jv}
    Take $\nu=\partial_{x_2}+y_2\partial_{z_1}+y_1\partial_{z_2}\in\SD$. Then: $$\tilde{J}\nu=-\left(\frac{x_2^2+1}{\sqrt{3-2x_2^2-x_2^4}}\right)\left(\partial_{x_2}+y_2\partial_{z_1}+y_1\partial_{z_2}\right)+\left(\frac{2}{\sqrt{3-2x_2^2-x_2^4}}\right)\left(\partial_{x_1}+y_1\partial_{z_1}+y_2\partial_{z_2}\right).$$
\end{lemma}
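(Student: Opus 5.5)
The plan is to compute $\tilde{J}\nu$ from the eigenspace decomposition $\SD\otimes\mathbb{C}=W\oplus\bar{W}$ of Subsection \ref{JD}, where $W=\langle u_1,u_2\rangle=(\SD\otimes\mathbb{C})^-$. On $\SD$ we have $\tilde{J}=J$, so $\tilde{J}$ acts by $-i$ on $W$ and by $+i$ on $\bar{W}$. In the notation of Lemma \ref{lemma:framing}, $\nu=\Xi_2$, and the vector $\partial_{x_1}+y_1\partial_{z_1}+y_2\partial_{z_2}$ in the statement is $\Xi_1$. From Eq. (\ref{u1}), $u_1=\Xi_2+(t-x_2^2-1)\Xi_1$.

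The first step is to simplify the coefficient. Since $\operatorname{Re}(t)=\frac{x_2^2+1}{2}$, we get $t-x_2^2-1=-\operatorname{Re}(t)+i\operatorname{Im}(t)=-\bar{t}$. Hence $u_1=\Xi_2-\bar{t}\,\Xi_1$ and $\bar{u}_1=\Xi_2-t\,\Xi_1$. Because $u_1,\bar{u}_1$ involve only $\Xi_1,\Xi_2$, the real plane $\langle\Xi_1,\Xi_2\rangle$ is $\tilde{J}$-invariant, and we never need $u_2$. Inverting the relations gives
\begin{align*}
\Xi_1&=\frac{u_1-\bar{u}_1}{2i\operatorname{Im}(t)},\\
\Xi_2&=\frac{u_1+\bar{u}_1}{2}+\operatorname{Re}(t)\,\Xi_1.
\end{align*}

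Next, apply $\tilde{J}$ term by term, using $\tilde{J}u_1=-iu_1$ and $\tilde{J}\bar{u}_1=i\bar{u}_1$. This yields $\tilde{J}\big(\tfrac{u_1+\bar{u}_1}{2}\big)=-\tfrac{i}{2}(u_1-\bar{u}_1)=\operatorname{Im}(t)\,\Xi_1$. It also yields $\tilde{J}\Xi_1=-\frac{u_1+\bar{u}_1}{2\operatorname{Im}(t)}=\frac{\operatorname{Re}(t)\Xi_1-\Xi_2}{\operatorname{Im}(t)}$. Combining the two,
\[
\tilde{J}\Xi_2=\frac{\operatorname{Im}(t)^2+\operatorname{Re}(t)^2}{\operatorname{Im}(t)}\,\Xi_1-\frac{\operatorname{Re}(t)}{\operatorname{Im}(t)}\,\Xi_2 .
\]

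The final step is to simplify using $|t|^2=1$. This holds because $t$ and $\bar{t}$ are the two roots of $s^2-(x_2^2+1)s+1=0$ (Lemma \ref{election:t}), so their product $t\bar{t}$ equals $1$. Substituting $\operatorname{Im}(t)=\frac{\sqrt{3-2x_2^2-x_2^4}}{2}$ and $\operatorname{Re}(t)=\frac{x_2^2+1}{2}$ gives exactly the coefficients $\frac{2}{\sqrt{3-2x_2^2-x_2^4}}$ on $\Xi_1$ and $-\frac{x_2^2+1}{\sqrt{3-2x_2^2-x_2^4}}$ on $\Xi_2$ claimed in the statement.

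The only delicate points are the sign conventions. One must use $W$ as the $(-i)$-eigenspace, as in the Remark of Subsection \ref{JD}, and must recognise $t-x_2^2-1=-\bar{t}$. Both are easy to get wrong, so I would double-check the result by verifying that the coefficient matrix of $\tilde{J}$ on $\langle\Xi_1,\Xi_2\rangle$ squares to $-\Id$.
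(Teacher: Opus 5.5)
Your proposal is correct and is essentially the paper's argument: both write $\nu$ as a combination of $u_1\in W=(\SD\otimes\mathbb{C})^-$ and $\bar u_1\in\bar W$, then apply $\tilde J=-i$ on $W$ and $+i$ on $\bar W$. The only difference is bookkeeping. The paper states the coefficient $\tfrac12-i\tfrac{x_2^2+1}{2\sqrt{3-2x_2^2-x_2^4}}$ of $u_1$, checks that $\nu=2\operatorname{Re}(\omega)$, and reads off $\tilde J\nu=2\operatorname{Im}(\omega)$; you instead derive the same coefficient by inverting $u_1=\Xi_2-\bar t\,\Xi_1$ and then simplify with $|t|^2=1$.
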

\begin{proof}
First, we decompose $\nu$ as the sum of an element $\omega\in W$ and its conjugate $\bar{\omega}\in\bar{W}$:
\begin{equation}\label{decompositionv}
    \nu=\underbrace{\left(\frac{1}{2}-i\cdot\frac{x_2^2+1}{2\sqrt{3-2x_2^2-x_2^4}}\right)u_1}_\omega+\underbrace{\left(\frac{1}{2}+i\cdot\frac{x_2^2+1}{2\sqrt{3-2x_2^2-x_2^4}}\right)\bar{u}_1}_{\bar{\omega}},
\end{equation}

where recall that $u_1$ and $u_2$ are defined in expressions (\ref{u1}) and (\ref{u2}). 

In order to check that the decomposition from Eq. (\ref{decompositionv}) is correct, it suffices to check that $\nu=2\cdot\Re(\omega)$, where $\Re(\omega)$ denotes the real part of $\omega$. On one hand, we have that

\begin{align}
\omega &= \left(\frac{1}{2}-i\cdot\frac{x_2^2+1}{2\sqrt{3-2x_2^2-x_2^4}}\right)\cdot 
\left((\partial_{x_2}+y_2\partial_{z_1}+y_1\partial_{z_2}) 
+ (t-x_2^2-1)\cdot(\partial_{x_1}+y_1\partial_{z_1}+y_2\partial_{z_2})\right)
\label{eqomega1}\\
&=\left(\frac{1}{2}-i\cdot\frac{x_2^2+1}{2\sqrt{3-2x_2^2-x_2^4}}\right)\cdot 
\left(\partial_{x_2}+y_2\partial_{z_1}+y_1\partial_{z_2}\right) \notag\\ 
&+\left(\frac{1}{2}-i\cdot\frac{x_2^2+1}{2\sqrt{3-2x_2^2-x_2^4}}\right)\cdot 
\left(\dfrac{-x_2^2-1+ i \sqrt{3-2x_2^2-x_2^4}}{2}\right)\cdot
(\partial_{x_1}+y_1\partial_{z_1}+y_2\partial_{z_2})\notag
\end{align}

And, therefore,
\begin{align*}
    \Re(\omega)&=\frac{1}{2}\cdot (\partial_{x_2}+y_2\partial_{z_1}+y_1\partial_{z_2}) +\frac{-x_2^2-1+x_2^2+1}{4}\cdot(\partial_{x_1}+y_1\partial_{z_1}+y_2\partial_{z_2}),
    \end{align*}
from where we conclude that $2\cdot\Re(\omega)=\partial_{x_2}+y_2\partial_{z_1}+y_1\partial_{z_2}=\nu$. 
So, now that we have checked that the decomposition from Eq. (\ref{decompositionv}) is correct, we just have that \begin{align*}
    \tilde{J}\nu=-i\omega+i\bar{\omega}=-\frac{x_2^2+1}{\sqrt{3-2x_2^2-x_2^4}}\left(\partial_{x_2}+y_2\partial_{z_1}+y_1\partial_{z_2}\right)+\frac{2}{\sqrt{3-2x_2^2-x_2^4}}\left(\partial_{x_1}+y_1\partial_{z_1}+y_2\partial_{z_2}\right).
\end{align*}
The last equality follows by taking into account that $-i\omega+i\bar{\omega}=2\Im(\omega)$, where $\Im(\omega)$ denotes the imaginary part of $\omega$, which can be read directly from the second equality in Eq. (\ref{eqomega1}). This yields the claim.\end{proof}
Our ultimate goal is to show that the tensor $S$ (Eq. (\ref{eq: S})) does not vanish. Therefore, if we find two vectors $R\in TM$, $\nu\in\SD$ so that $S(R,\nu)\neq 0 \mod \SD$, we will be done. For the ease of notation, we will denote by $u \equiv v$ any equality of the form $u=v\mod\SD$; i.e. it denotes an equality in the quotient $Q=TM/\SD$. We make the choices $R=\partial_{z_2}$ and $\nu=\partial_{x_2}+y_2\partial_{z_1}+y_1\partial_{z_2}$. Then
\begin{align*}
S(R,\nu)\equiv\underbrace{[R,\nu]}_0+J[JR,\nu]\equiv\tilde{J}[\tilde{J}R,\nu]+\tilde{J}[KR,\nu]
\end{align*}
where we have used the fact that $J=\tilde{J}+K$ and also that the image of $K$ lies within $D$. Additionally, note that $[R,\nu]\equiv 0$ for our particular choice of $R,\nu$. Now recall (Eq. \ref{eq:LKuv}) that 
\begin{align*}
    [KR,\nu]\equiv\frac{-\tilde{S}(R,\nu)+J\tilde{S}(R,J\nu)}{2}\equiv\frac{-J[\tilde{J}R,\nu]-[\tilde{J}R,J\nu]}{2},
\end{align*}
where note that $-\tilde{S}(R,\nu)\equiv -J[\tilde{J}R,\nu]$ follows from Eq. (\ref{eq: Stilde}). Additionally, Eq. (\ref{eq: Stilde}) also implies $J\tilde{S}(R,J\nu)\equiv J[R,J\nu]-[\tilde{J}R,J\nu]\equiv -[\tilde{J}R,J\nu]$. The fact that $J[R,J\nu]\equiv 0$ follows from the fact that $R=\partial_{z_2}$ is a Reeb direction and $J\nu\in \SD$ does not have a Reeb component, thus yielding $[R,J\nu]\equiv 0$. Alternatively, this readily follows from Lemma \ref{lemma: Jv} as well.

Therefore, 
\begin{align*}
    S(R,\nu)\equiv\tilde{J}[\tilde{J}R,\nu]+J\Bigg(-\frac{J[\tilde{J}R,\nu]}{2}-\frac{[\tilde{J}R,J\nu]}{2}\Bigg)\equiv \tilde{J}[\tilde{J}R,\nu]+\frac{[\tilde{J}R,\nu]}{2}-\tilde{J}\frac{[\tilde{J}R,J\nu]}{2}
\end{align*}

where the reader should have in mind Remark \ref{Remark:JandJtilde} and, so, 
\begin{align*}
    S(R,\nu)\not\equiv 0 \Longleftrightarrow 2J[\tilde{J}R,\nu]+[\tilde{J}R,\nu]-\tilde{J}[\tilde{J}R,\tilde{J}\nu]\neq0\mod\SD.
\end{align*}
But, note that since $J:Q\to Q$ is an endomorphism that satisfies $J^2=-\Id$, then 
\begin{align}\label{ConditionNonVanishing}  S(R,\nu)\not\equiv 0 &\Longleftrightarrow -2\cdot[\tilde{J}R,\nu]+\tilde{J}[\tilde{J}R,\nu]+[\tilde{J}R,\tilde{J}\nu]\neq0\mod\SD\\
    &\Longleftrightarrow 2\cdot[\tilde{J}R,\nu]-\tilde{J}[\tilde{J}R,\nu]-[\tilde{J}R,\tilde{J}\nu]\neq0\mod\SD.
\end{align}
We will examine each of the three summands separately. For the ease of notation, we will write $\Delta=\sqrt{3-2x_2^2-x_2^4}$ and we will also write $f'$ in order to denote the derivative of an expression $f$ with respect to the $x_2$-coordinate; i.e. $f':=\frac{\partial}{\partial x_2}(f)$.

In order to perform the following computations, it is convenient to recall that:
\begin{itemize}
    \item[i)] Eq. (\ref{Jv1}): $\tilde{J}\partial_{z_1}=\frac{-(x_2^2+1)}{\Delta}\partial_{z_1}+\frac{2}{\Delta}\partial_{z_2}$.
    \item[ii)] Eq. (\ref{Jv2}): $\tilde{J}R=\tilde{J}\partial_{z_2}=\frac{-2}{\Delta}\cdot\partial_{z_1}+\frac{x_2^2+1}{\Delta}\cdot\partial_{z_2}$.
    \item[iii)] $\nu=\partial_{x_2}+y_2\partial_{z_1}+y_1\partial_{z_2}$.
    \item[iv)]  Lemma \ref{lemma: Jv}: $\tilde{J}\nu=-\left(\frac{x_2^2+1}{\Delta}\right)\cdot\left(\partial_{x_2}+y_2\partial_{z_1}+y_1\partial_{z_2}\right)+\left(\frac{2}{\Delta}\right)\cdot\left(\partial_{x_1}+y_1\partial_{z_1}+y_2\partial_{z_2}\right)$.
\end{itemize} 

\begin{align*}
2\cdot[\tilde{J}R,\nu]&=2\cdot\bigg[\frac{-2}{\Delta}\partial_{z_1}+\frac{x_2^2+1}{\Delta}\partial_{z_2},  \partial_{x_2}+y_2\partial_{z_1}+y_1\partial_{z_2}\bigg]=2\cdot\left(\frac{2}{\Delta}\right)'\partial_{z_1}-2\cdot\left(\frac{x_2^2+1}{\Delta}\right)'\partial_{z_2}.\\
    \tilde{J}[\tilde{J}R,\nu]&=\tilde{J}\bigg[\frac{-2}{\Delta}\partial_{z_1}+\frac{x_2^2+1}{\Delta}\partial_{z_2},  \partial_{x_2}+y_2\partial_{z_1}+y_1\partial_{z_2}\bigg]=\tilde{J}\bigg[\left(\frac{2}{\Delta}\right)'\partial_{z_1}-\left(\frac{x_2^2+1}{\Delta}\right)'\partial_{z_2}\bigg]=\\
    &=\left(\frac{2}{\Delta}\right)'\tilde{J}({\partial_{z_1}})-\left(\frac{x_2^2+1}{\Delta}\right)'\tilde{J}({\partial_{z_2}})=\left(\frac{2}{\Delta}\right)'\cdot\left(\frac{-(x_2^2+1)}{\Delta}\partial_{z_1}+\frac{2}{\Delta}\partial_{z_2}\right)\\ 
    &-\left(\frac{x_2^2+1}{\Delta}\right)'\left(\frac{-2}{\Delta}\partial_{z_1}+\frac{x_2^2+1}{\Delta}\partial_{z_2}\right)=\left(-\left(\frac{2}{\Delta}\right)'\cdot\left(\frac{x_2^2+1}{\Delta}\right)+\left(\frac{x_2^2+1}{\Delta}\right)'\cdot\left(\frac{2}{\Delta}\right)\right)\cdot\partial_{z_1}\\&+\left(\left(\frac{2}{\Delta}\right)'\cdot\left(\frac{2}{\Delta}\right)-\left(\frac{x_2^2+1}{\Delta}\right)'\cdot\left(\frac{x_2^2+1}{\Delta}\right)\right)\cdot\partial_{z_2}.\\[5pt]
    [\tilde{J}R, \tilde{J}\nu]&=\bigg[\frac{-2}{\Delta}\partial_{z_1}+\frac{x_2^2+1}{\Delta}\partial_{z_2}, -\left(\frac{x_2^2+1}{\Delta}\right)\cdot\left(\partial_{x_2}+y_2\partial_{z_1}+y_1\partial_{z_2}\right)+\left(\frac{2}{\Delta}\right)\left(\partial_{x_1}+y_1\partial_{z_1}+y_2\partial_{z_2}\right)\bigg]=\\
    &=\bigg[\frac{-2}{\Delta}\partial_{z_1}+\frac{x_2^2+1}{\Delta}\partial_{z_2}, -\frac{x_2^2+1}{\Delta}\cdot\partial_{x_2}\bigg]=-\left(\frac{x_2^2+1}{\Delta}\right)\left(\frac{2}{\Delta}\right)'\cdot\partial_{z_1}+\left(\frac{x_2^2+1}{\Delta}\right)\left(\frac{x_2^2+1}{\Delta}\right)'\cdot\partial_{z_2}.
    \end{align*}
Observe that 
\begin{align*}
    &\left(\frac{2}{\Delta}\right)'=\frac{\partial}{\partial x_2}\left(\frac{2}{\sqrt{3-2x_2^2-x_2^4}}\right)=\frac{4x_2(1+x_2^2)}{\sqrt{(3-2x_2^2-x_2^4)^3}},\\ &\left(\frac{x_2^2+1}{\Delta}\right)'=\frac{\partial}{\partial x_2}\left(\frac{1+x_2^2}{\sqrt{3-2x_2^2-x_2^4}}\right)=\frac{8x_2}{\sqrt{(3-2x_2^2-x_2^4)^3}}.
\end{align*}
In particular, the following relation holds: $\left(\frac{2}{\Delta}\right)'=\frac{x_2^2+1}{2}\left(\frac{x_2^2+1}{\Delta}\right)'$.
Therefore, if we write $2\cdot[\tilde{J}R,\nu]-\tilde{J}[\tilde{J}R,\nu]-[\tilde{J}R,\tilde{J}\nu]=A_1\cdot\partial_{z_1}+A_2\cdot\partial_{z_2}\mod\SD $, then note that its $\partial_{z_1}$-component $A_1$ corresponds to
\begin{align*}
    A_1&=2\cdot\left(\frac{2}{\Delta}\right)'-\left(-\left(\frac{2}{\Delta}\right)'\cdot\left(\frac{x_2^2+1}{\Delta}\right)+\left(\frac{x_2^2+1}{\Delta}\right)'\cdot\left(\frac{2}{\Delta}\right)\right)+\left(\frac{x_2^2+1}{\Delta}\right)\left(\frac{2}{\Delta}\right)'=\\
    &=2\cdot\left(\frac{2}{\Delta}\right)'+\left(\left(\frac{2}{\Delta}\right)'\cdot\left(\frac{x_2^2+1}{\Delta}\right)-\left(\frac{2}{\Delta}\right)'\cdot\left(\frac{2}{x_2^2+1}\right)\cdot\left(\frac{2}{\Delta}\right)\right)+\left(\frac{x_2^2+1}{\Delta}\right)\left(\frac{2}{\Delta}\right)'=\\
    &=\left(\frac{2}{\Delta}\right)'\cdot\left(2-\frac{4}{(x_2^2+1)\Delta}+2\frac{x^2_2+1}{\Delta}\right)= \frac{4x_2(1+x^2_2)}{\Delta^3}\cdot\left(2-\frac{4}{(1+x_2^2)\Delta}+\frac{2(1+x_2^2)}{\Delta}\right)\\
    &=\frac{8x_2}{\Delta^4}\cdot\left(\Delta(1+x^2_2)-2+(1+x^2_2)^2\right)=8x_2\frac{(1+x^2_2)(1+x^2_2+\Delta)-2}{\Delta^4}.
\end{align*}

If we show that $A_1$ does not identically vanish on any local neighborhood, then the main theorem will follow. We could argue analogously with $A_2$. We will actually prove that $A_1$ does not identically vanish on any arbitrarily small open neighborhood $\Op(p)\subset\R^6$ around any point $p\in\{|x_2|<1\}$. An analogous argument can be carried out for $A_2$ although, since it is not necessary for our purposes, we leave it as an exercise for the interested reader.

\begin{lemma}\label{lemma:A1NotVanishing}
The expression $A_1$, when regarded as a real function $A_1:(-1,1)\to\R$ depending on the $x_2$-variable, only vanishes at $x_2=0$.
\end{lemma}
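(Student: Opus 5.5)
We work from the closed form obtained just above the statement,
\[
A_1(x_2)=8x_2\,\frac{(1+x_2^2)(1+x_2^2+\Delta)-2}{\Delta^4},\qquad \Delta=\sqrt{3-2x_2^2-x_2^4},
\]
valid on $(-1,1)$. There $\Delta>0$: since $3-2x_2^2-x_2^4=(1-x_2^2)(3+x_2^2)$, it is positive for $|x_2|<1$. So the denominator $\Delta^4$ never vanishes, and the zeros of $A_1$ are exactly the zeros of the product of $8x_2$ with the numerator factor
\[
N(x_2):=(1+x_2^2)(1+x_2^2+\Delta)-2 .
\]
The factor $8x_2$ vanishes only at $x_2=0$. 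The plan is therefore to show that $N$ has no zeros on $(-1,1)$, and in fact that $N>0$ there.

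To prove positivity, set $s=x_2^2\in[0,1)$, so that $N=(1+s)^2+(1+s)\Delta-2$. The first term satisfies $(1+s)^2\ge 1$. The second term is strictly positive, since $1+s\ge 1$ and $\Delta>0$. More concretely, $\Delta^2=(1-s)(3+s)$, and one can bound $\Delta$ from below by $\sqrt{3}\sqrt{1-s}$. A cleaner route is to note directly that $N\ge 1+\Delta-2=\Delta-1$ is not sufficient near $s\to1$, since there $\Delta\to 0$. Near $s=1$ one should instead use $(1+s)^2-2\ge 2>0$.

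To handle both regimes uniformly, split the argument by the size of $s$:
\begin{itemize}
\item If $(1+s)^2\ge 2$, then $N\ge (1+s)\Delta>0$.
\item If $(1+s)^2<2$, that is $s<\sqrt2-1$, then $\Delta^2=3-2s-s^2>3-2(\sqrt2-1)-(\sqrt2-1)^2=2$. Hence $(1+s)\Delta>\sqrt2>1$, and therefore $N>1+1-2=0$.
\end{itemize}
Either way $N>0$ on $[0,1)$.

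Conclusion: $A_1(x_2)=0$ if and only if $x_2=0$. As a consequence, $A_1$ does not vanish identically on any open neighbourhood of any point of $\{|x_2|<1\}$, since every such neighbourhood contains points with $x_2\neq 0$. The only step requiring care is the positivity of $N$ near $s=0$, where the crude bound fails. The case split above handles it, and if needed one could check the value at $s=0$ directly: $N(0)=1+\sqrt3-2>0$.
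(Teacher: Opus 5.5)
Your proof is correct, but it argues differently from the paper.

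The paper also reduces to showing that $f(x_2)=(1+x_2^2)(1+x_2^2+\Delta)-2$ has no zero, but it does so algebraically:
\begin{itemize}
\item it substitutes $z=1+x_2^2$ and isolates the radical as $\sqrt{4-z^2}=(2-z^2)/z$;
\item it squares to get the biquadratic $z^4-4z^2+2=0$, i.e.\ $z^2=2\pm\sqrt2$;
\item it discards $2-\sqrt2$ because $z^2\ge 1$, and discards $2+\sqrt2$ because it makes $(2-z^2)/z$ negative, so it is an extraneous root introduced by squaring.
\end{itemize}

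You instead prove the stronger statement $N>0$ on $[0,1)$ by splitting at $(1+s)^2=2$. Where $(1+s)^2\ge 2$, the term $(1+s)\Delta>0$ already suffices. Where $s<\sqrt2-1$, monotonicity of $3-2s-s^2$ gives $\Delta>\sqrt2$, hence $N>\sqrt2-1>0$.

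Your route avoids the squaring step and the bookkeeping of spurious roots. It also shows that $A_1$ has the same sign as $x_2$, which the paper does not need. The paper's route is more mechanical and shows exactly where a zero could a priori have occurred.

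One small correction: your exploratory remark that $(1+s)^2-2\ge 2$ near $s=1$ is false for $s<1$, since the quantity tends to $2$ from below. It plays no role in your final case analysis, so simply delete it.
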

\begin{proof}
    Write $f(x_2):=\left((1+x^2_2)(1+x^2_2+\Delta)-2\right)$ and note that the expression for $A_1$ that we obtained earlier can be expressed as $A_1=\frac{8x_2}{\Delta^4}\cdot\left((1+x^2_2)(1+x^2_2+\Delta)-2\right)=\frac{8x_2}{\Delta^4}\cdot f(x_2)$.

    On the other hand, we will show that $f(x_2)$ does not vanish. Note that $\left((1+x^2_2)(1+x^2_2+\Delta)-2\right)=0\Longleftrightarrow(1+x^2_2)\left(1+x^2_2+\sqrt{4-(1+x^2_2)^2}\right)=2$. Now, write $z=1+x_2^2$ and thus this condition reads as 
    $z(z+\sqrt{4-z^2})=2\Longleftrightarrow \sqrt{4-z^2}=\frac{2-z^2}{z}$. Taking squares, $z^2(4-z^2)=(2-z^2)^2\Longleftrightarrow -z^4+4z^2=z^4-4z^2+4\Longleftrightarrow 2(z^4-4z^2+2)=0\Longleftrightarrow z^2=2\pm\sqrt{2}\Longleftrightarrow (1+x^2_2)^2=2\pm\sqrt{2}$. None of these potential solutions are valid. On one hand, $(1+x^2_2)^2\geq 1$ whereas  $2-\sqrt{2}<1$. This discards the first solution. On the other hand, the left-hand side of the previous equality $\sqrt{4-z^2}=\frac{2-z^2}{z}$ was non-negative, which then forces $\frac{2-z^2}{z}$ to be non-negative or, equivalently, $2-z^2\geq 0$ (since $z=(1+x^2_2)>0$). Nonetheless, the solution $z^2=2+\sqrt{2}$ is incompatible with this since it would imply that $2-z^2=-\sqrt{2}$, thus discarding the second solution.
    
In order to conclude, we just observe that the expression $\frac{8x_2}{\Delta^4}$ only vanishes at the origin $0\in(-1,1)$ whereas $f(x_2)$ does not  vanish as we just checked. This yields the claim.\end{proof}

As a consequence of the previous lemma we get the following result.
\begin{proposition}\label{prop:condiii}
The tensor $S$  (Eq. \ref{eq: S}) is not identically zero on any non-empty open subset of $\{|x_2|<1\}\subset\R^6$ and, thus, $(\{|x_2|<1\},\SD)$ does not support a complex contact structure around any point, not even locally nor up to diffeomorphism.
\end{proposition}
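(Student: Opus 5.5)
Since all the necessary computations are already in place, the plan is to assemble them. First, fix an arbitrary non-empty open set $U\subset\{|x_2|<1\}$ and evaluate the tensor $S$ on the pair $R=\partial_{z_2}$, $\nu=\partial_{x_2}+y_2\partial_{z_1}+y_1\partial_{z_2}$, which are fields defined on all of $\{|x_2|<1\}$. The reduction carried out before Lemma \ref{lemma:A1NotVanishing} shows, via the decomposition $J=\tilde{J}+K$ and the defining identity (\ref{eq:LKuv}) for $K$, that $S(R,\nu)\not\equiv 0$ at a point if and only if the class $2[\tilde{J}R,\nu]-\tilde{J}[\tilde{J}R,\nu]-[\tilde{J}R,\tilde{J}\nu]$ is non-zero in $Q$ there. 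Writing this class as $A_1\partial_{z_1}+A_2\partial_{z_2}$, it suffices to find a point of $U$ where $A_1\neq 0$. Here we use that $\partial_{z_1},\partial_{z_2}$ project to a basis of $Q$, by condition $i)$ of Definition \ref{Def:ReebDirections} and Lemma \ref{lemma:reeb}.

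By Lemma \ref{lemma:A1NotVanishing}, $A_1$ depends only on $x_2$ and vanishes only on the hyperplane $\{x_2=0\}$. A non-empty open set $U$ cannot be contained in a hyperplane, so $U$ contains points with $x_2\neq 0$. At such points $S(R,\nu)\neq 0$, and hence $S$ does not vanish identically on $U$.

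It then remains to translate this into the non-existence of a complex contact structure. Suppose a germ of $(\{|x_2|<1\},\SD)$ at some point $p$ were diffeomorphic to a germ induced by a complex contact structure. Then, by the holomorphic Darboux theorem (Remark \ref{Remark: Darboux}), it would be diffeomorphic to the flat model of Example \ref{ex: complex-contact}, and by Theorem \ref{thm:LocalComplexContact} its tensor $S$ would vanish on a neighbourhood of $p$.

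The main subtle point is naturality: the tensor $S$ must be carried to $S$ under diffeomorphisms. This holds because the almost complex structure $J$ of Theorem \ref{theorem: 1} is canonically determined by $\SD$ and the orientation, and both Lie brackets and $\SD$ are preserved by pushforward. If the diffeomorphism reverses orientation, $J$ is replaced by $-J$, which by Remark \ref{signJ} yields the same $S$. Consequently, vanishing of $S$ near the image point would force vanishing of $S$ on an open neighbourhood of $p$, contradicting the previous paragraph. The same argument applies to any open set around any point, which establishes the proposition.
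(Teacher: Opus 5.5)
Your proposal is correct and is essentially the paper's own proof. It uses the same test pair $R=\partial_{z_2}$, $\nu=\partial_{x_2}+y_2\partial_{z_1}+y_1\partial_{z_2}$, the same reduction to the $\partial_{z_1}$-component $A_1$, Lemma \ref{lemma:A1NotVanishing}, and Theorem \ref{thm:LocalComplexContact}; your paragraph on the diffeomorphism-naturality of $S$ (including the orientation-reversing case via Remark \ref{signJ}) only makes explicit what the paper leaves implicit.

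One caveat, inherited from the paper rather than introduced by you. By my own derivation from condition iv) of Theorem \ref{theorem: 1}, it is $J\SL(Ku,v)$, not $\SL(Ku,v)$, that must equal the right-hand side of Eq. (\ref{eq:LKuv}). The true value is then $S(R,\nu)\equiv\tfrac12\bigl(J[\tilde JR,\nu]-[\tilde JR,\tilde J\nu]\bigr)\equiv\tfrac{8x_2}{\Delta^4}\bigl(\partial_{z_1}-\tfrac{1+x_2^2}{2}\partial_{z_2}\bigr)$, rather than the expression producing $A_1$. Since this likewise vanishes only on $\{x_2=0\}$, your conclusion is unaffected.
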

\begin{proof}
We checked (see Eq. \ref{ConditionNonVanishing}) that for the choice $R=\partial_{z_2}$ and $\nu=\partial_{x_2}+y_2\partial_{z_1}+y_1\partial_{z_2}$, $S(R,\nu)\neq 0\mod\SD$ if and only if \begin{align*}
    S(R,\nu)\not\equiv 0 \Longleftrightarrow 2\cdot[\tilde{J}R,\nu]-\tilde{J}[\tilde{J}R,\nu]-[\tilde{J}R,\tilde{J}\nu]\neq0\mod\SD.
\end{align*}
On the other hand, we saw that if we write 
\begin{align}\label{decompositionA1A2}
    &2\cdot[\tilde{J}R,\nu]-\tilde{J}[\tilde{J}R,\nu]-[\tilde{J}R,\tilde{J}\nu]\equiv A_1\cdot\partial_{z_1}+A_2\cdot\partial_{z_2},
\end{align}
then, as a consequence of Lemma \ref{lemma:A1NotVanishing}, we conclude that the $\partial_{z_1}$-component $A_1$ of the expression ($\ref{decompositionA1A2}$), which only depends on the  $x_2$-variable, does not identically vanish on any non-empty open subset of $\{|x_2|<1\}\subset\R^6$. Therefore, neither does $S(R,\nu)$. This yields the first part of the claim which, together with Theorem \ref{thm:LocalComplexContact}, completes the proof of the second part as well.
\end{proof}

We proceed now with the proof of Theorem \ref{mainthm}.

\begin{proof}[Proof of Theorem \ref{mainthm}]
Note that condition $i)$ follows from Lemma \ref{lemma:fatness}, condition $ii)$ follows from Lemma \ref{lemma:reeb} and, finally, condition $iii)$ follows from Proposition \ref{prop:condiii}. 
\end{proof}

After developing the necessary technical results and having proved Theorem \ref{mainthm}, we close the section by stating and proving the main result in this article. We provide a global fat $(4,6)$-distribution on $\R^6$ possessing two Reeb directions and such that the germ it defines around any point $ q\in\R^6$ is not induced by a complex contact structure, not even up to local diffeomorphism.

\begin{theorem}\label{thm:global}
    Consider the global distribution $\left(\mathbb{R}^6, \xi=\ker(\beta_1)\cap\ker(\beta_2)\right)$ where
    \begin{align*}
        \beta_1&=dz_1-y_1dx_1-\frac{2y_2}{\pi(1+x_2^2)}dx_2-\left(\frac{8}{3\pi^3}\arctan^3(x_2)+\frac{2}{\pi}\arctan(x_2)+2x_1\right)dy_1,\\
        \beta_2&=dz_2-y_2dx_1-\frac{2y_1}{\pi(1+x_2^2)}dx_2.
    \end{align*}
    The associated distribution germ around any point $q\in\R^6$ satisfies the following three properties: 
    \begin{itemize}
        \item[i)] It is a corank-$2$ fat distribution germ.
        \item[ii)] It admits two Reeb directions given by $X_1=\partial_{z_1}$ and $X_2=\partial_{z_2}$.
        \item[iii)] Any diffeomorphic germ does not support a complex contact structure.
    \end{itemize}
\end{theorem}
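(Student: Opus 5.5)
The plan is to realise $(\R^6,\xi)$ as the pullback of $(\{|x_2|<1\},\SD)$ from Theorem \ref{mainthm} under a diffeomorphism onto that region. Then properties i) and iii) transfer directly, and ii) can be checked by hand. The natural candidate is
\[
\Phi\colon\R^6\to\{|x_2|<1\},\qquad \Phi(x_1,x_2,y_1,y_2,z_1,z_2)=\Bigl(x_1,\tfrac{2}{\pi}\arctan(x_2),y_1,y_2,z_1,z_2\Bigr),
\]
which is a diffeomorphism of $\R^6$ onto $\{|x_2|<1\}$ and is the identity in every coordinate except $x_2$.

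The first step is to compute $\Phi^*\lambda_1$ and $\Phi^*\lambda_2$. Write $u=\frac{2}{\pi}\arctan(x_2)$ for the new second coordinate. Then $du=\frac{2}{\pi(1+x_2^2)}dx_2$, and
\[
\frac{u^3}{3}+u=\frac{8}{3\pi^3}\arctan^3(x_2)+\frac{2}{\pi}\arctan(x_2).
\]
Substituting these into the formulas of Theorem \ref{mainthm} should give exactly $\Phi^*\lambda_1=\beta_1$ and $\Phi^*\lambda_2=\beta_2$. Hence $\xi=\Phi^*\SD$, i.e.\ $d\Phi(\xi)=\SD$, so the germ of $\xi$ at any $q\in\R^6$ is diffeomorphic to the germ of $\SD$ at $\Phi(q)$, and $\Phi(q)$ lies in $\{|x_2|<1\}$.

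With this in hand, the three properties follow:
\begin{itemize}
\item[i)] Fatness is a $\Diff$-invariant condition: pulling back $\lambda_1,\lambda_2$ rescales the quadratic form $q_\SD$ by a positive factor, coming from the change of volume form on $\SD$. So i) follows from Lemma \ref{lemma:fatness}.
\item[iii)] A germ diffeomorphic to that of $\xi$ at $q$ is diffeomorphic to the germ of $\SD$ at $\Phi(q)$. By Proposition \ref{prop:condiii}, together with Theorem \ref{thm:LocalComplexContact}, such a germ cannot support a complex contact structure.
\item[ii)] This can be verified directly, or seen from the fact that $\Phi$ fixes $z_1,z_2$, so $d\Phi(\partial_{z_i})=\partial_{z_i}$. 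Directly: $\beta_i(\partial_{z_j})=\delta(i,j)$, and $\partial_{z_1},\partial_{z_2}$ commute. The coefficients of $\beta_1,\beta_2$ do not depend on $z$, and $d\beta_i$ contains no $dz$ terms, so $\iota_{\partial_{z_j}}d\beta_i=0$. Finally, $T\R^6=\xi\oplus\langle\partial_{z_1},\partial_{z_2}\rangle$, since each $\beta_i$ equals $dz_i$ plus terms with no $dz$ components.
\end{itemize}

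The only step needing care is the pullback identity, in particular that the $dy_1$-coefficient matches exactly, including the $2x_1$ term, which is unaffected by $\Phi$. Everything else is a formal transfer of the results already proved for $\SD$.
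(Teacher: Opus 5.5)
Your proposal is correct and is essentially the paper's proof. Your $\Phi$ is the inverse of the paper's map $\phi(x_1,x_2,\dots)=(x_1,\tan(\tfrac{\pi}{2}x_2),\dots)$, your identity $\Phi^*\lambda_i=\beta_i$ is the paper's $(\phi^{-1})^*\lambda_i=\beta_i$, and i)--iii) are then transferred from Theorem \ref{mainthm} by diffeomorphism invariance; the only cosmetic difference is that you verify the Reeb conditions in ii) directly on $\beta_1,\beta_2$.
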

\begin{proof}

Note that the diffeomorphism 

\begin{equation*}\label{eq:p}
		\begin{array}{rccl}
		\phi\colon & \{|x_2|<1\}\subset\R^6 & \longrightarrow &  \R^6\\
		& (x_1,x_2,y_1,y_2,z_1,z_2)& \longmapsto &(x_1, \tan(\frac{\pi}{2}x_2),y_1,y_2,z_1,z_2)
		\end{array}
		\end{equation*} 
        
        is a diffeomorphism between the open set $\{|x_2|<1\}$ and $\R^6$. On the other hand, being fat and admitting local Reeb directions are local conditions preserved under diffeomorphisms. Therefore, every germ of the distribution $\xi:=\phi_*(\SD)$  around any point $q\in\R^6$ satisfies the three properties $i), ii)$ and $iii)$, where $\SD=\ker(\lambda_1)\cap\ker(\lambda_2)$ is given by the $1$-forms in (\ref{lambda1}),      (\ref{lambda2}) below. This follows from Theorem \ref{mainthm}. Let us just show that $\xi=\phi_*(\SD)$ in order to conclude. Since $\SD$ is defined by the $1$-forms
         \begin{align}\label{lambda1}
        \lambda_1&=dz_1-y_1dx_1-y_2dx_2-\left(\frac{x_2^3}{3}+x_2+2x_1\right)dy_1\\
        \lambda_2&=dz_2-y_2dx_1-y_1dx_2,\label{lambda2}
    \end{align}
    then $\phi_*(\SD)=\ker\left((\phi^{-1})^*\lambda_1 \right)\cap\ker\left((\phi^{-1})^*\lambda_2\right)$. 
    
    Write $(\tilde{x}_1,\tilde{x}_2,\tilde{y}_1,\tilde{y}_2,\tilde{z}_1,\tilde{z}_2)=\phi(x_1,x_2,y_1,y_2,z_1,z_2)$; i.e. \[
    \tilde{x}_1=x_1,\quad \tilde{x}_2=\tan\left(\frac{\pi}{2}\cdot x_2\right),\quad \tilde{y}_i=y_i,\quad \tilde{z}_i=z_i,\quad i=1,2.
    \]
    Then we have that the inverse map is given by 
    \[x_2=\frac{2}{\pi}\arctan\left(\tilde{x}_2\right), \quad x_1=\tilde{x}_1, \quad y_i=\tilde{y}_i,\quad z_i=\tilde{z}_i,\quad i=1,2. \]

    Finally, note that $dx_2=\frac{2}{\pi}\frac{1}{1+\tilde{x}_2^2}d\tilde{x}_2,\quad dx_1=d\tilde{x}_1,\quad dy_1=d\tilde{y}_1,\quad dz_1=d\tilde{z}_1, \quad$ and then 

\begin{align*}
    \beta_1&=(\phi^{-1})^*\lambda_1=d\tilde{z}_1-\tilde{y}_1d\tilde{x}_1-\frac{2\tilde{y}_2}{\pi(1+\tilde{x}_2^2)}d\tilde{x}_2-\left(\frac{8}{3\pi^3}\arctan^3(\tilde{x}_2)+\frac{2}{\pi}\arctan(\tilde{x}_2)+2\tilde{x}_1\right)d\tilde{y}_1,\\
        \beta_2&=d\tilde{z}_2-\tilde{y}_2d\tilde{x}_1-\frac{2\tilde{y}_1}{\pi(1+\tilde{x}_2^2)}d\tilde{x}_2, \text{ thus yielding the claim.}
\end{align*}\end{proof}

\end{document}